\newtheorem{theorem}{Theorem}[section]
\newtheorem{lemma}[theorem]{Lemma}
\newtheorem{definition}[theorem]{Definition}
\newtheorem{proposition}[theorem]{Proposition}
\newtheorem{corollary}[theorem]{Corollary}
\newtheorem{example}[theorem]{Example}
\newtheorem*{theorem*}{\it Theorem}
\def\vint_#1{\mathchoice%
          {\mathop{\kern 0.2em\vrule width 0.6em height 0.69678ex depth -0.58065ex
                  \kern -0.8em \intop}\nolimits_{\kern -0.4em#1}}%
          {\mathop{\kern 0.1em\vrule width 0.5em height 0.69678ex depth -0.60387ex
                  \kern -0.6em \intop}\nolimits_{#1}}%
          {\mathop{\kern 0.1em\vrule width 0.5em height 0.69678ex
              depth -0.60387ex
                  \kern -0.6em \intop}\nolimits_{#1}}%
          {\mathop{\kern 0.1em\vrule width 0.5em height 0.69678ex depth -0.60387ex
                  \kern -0.6em \intop}\nolimits_{#1}}}
\def\vintslides_#1{\mathchoice%
          {\mathop{\kern 0.1em\vrule width 0.5em height 0.697ex depth -0.581ex
                  \kern -0.6em \intop}\nolimits_{\kern -0.4em#1}}%
          {\mathop{\kern 0.1em\vrule width 0.3em height 0.697ex depth -0.604ex
                  \kern -0.4em \intop}\nolimits_{#1}}%
          {\mathop{\kern 0.1em\vrule width 0.3em height 0.697ex depth -0.604ex
                  \kern -0.4em \intop}\nolimits_{#1}}%
          {\mathop{\kern 0.1em\vrule width 0.3em height 0.697ex depth -0.604ex
                  \kern -0.4em \intop}\nolimits_{#1}}}
\numberwithin{equation}{section}
\def\1{\raisebox{2pt}{\rm{$\chi$}}}
\def\Xint#1{\mathchoice
   {\XXint\displaystyle\textstyle{#1}}%
   {\XXint\textstyle\scriptstyle{#1}}%
   {\XXint\scriptstyle\scriptscriptstyle{#1}}%
   {\XXint\scriptscriptstyle\scriptscriptstyle{#1}}%
   \!\int}
\def\XXint#1#2#3{{\setbox0=\hbox{$#1{#2#3}{\int}$}
     \vcenter{\hbox{$#2#3$}}\kern-.5\wd0}}
\def\dashint{\Xint-}
\definecolor{violet(ryb)}{rgb}{0.53, 0.0, 0.69}
\begin{document}

\title[BBM approach in metric spaces]{\bf Bourgain-Brezis-Mironescu approach in metric spaces with Euclidean tangents}

\author[W. G\'orny]{Wojciech G\'orny}

\address{W. G\'orny: Faculty of Mathematics, Informatics and Mechanics, University of Warsaw, Banacha 2, 02-097 Warsaw, Poland.
\hfill\break\indent
{\tt w.gorny@mimuw.edu.pl }}

\keywords{Nonlocal problems, Difference quotients, Metric measure spaces. \\
\indent 2010 {\it Mathematics Subject Classification:} 35R03, 46E35, 53C23.
}

\setcounter{tocdepth}{1}

\date{\today}

\begin{abstract}
In the setting of metric measure spaces satisfying the doubling condition and the $(1,p)$-Poincar\'e inequality, we prove a metric analogue of the Bourgain-Brezis-Mironescu formula for functions in the Sobolev space $W^{1,p}(X,d,\nu)$, under the assumption that for $\nu$-a.e. point the tangent space in the Gromov-Hausdorff sense is Euclidean with fixed dimension $N$. 
\end{abstract}

\maketitle


\section{Motivations}

In this paper, we focus on the characterisation of Sobolev and BV functions in metric spaces using integrated differential quotients. Our principal motivation is the paper \cite{BBM}, in which the authors prove the following characterisation of Sobolev functions on open subsets of $\mathbb{R}^N$.

\begin{theorem}[Bourgain, Brezis, Mironescu '01] \label{thm:bbm}
Suppose that $\Omega \subset \mathbb{R}^N$ is a smooth bounded domain. Assume that $f \in L^p(\Omega)$, where $p \in (1,\infty)$. Let $\rho_n$ be a sequence of nonnegative radial mollifiers such that $\int_{\mathbb{R}^N} \rho_n \, dx = 1$ and for every $\delta > 0$ we have $\lim_{n \rightarrow \infty} \int_\delta^\infty \rho_n(r)\,  r^{N-1} \, dr = 0$. Then: \\
(1) $u \in W^{1,p}(\Omega)$ if and only if
\begin{equation}
\liminf_{n \rightarrow \infty} \int_\Omega \int_\Omega \frac{|f(x) - f(y)|^p}{|x-y|^p} \, \rho_n(x-y) \, dx \, dy < \infty.
\end{equation}
(2) When $u \in W^{1,p}(\Omega)$, then
\begin{equation}\label{eq:problem}
\lim_{n \rightarrow \infty} \int_\Omega \int_\Omega \frac{|f(x) - f(y)|^p}{|x-y|^p} \, \rho_n(x-y) \, dx \, dy = K_{p,N} \| \nabla f \|^p_{L^p(\Omega)}.
\end{equation}
\end{theorem}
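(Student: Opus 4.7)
My plan is to establish (2) first under the assumption $f \in W^{1,p}(\Omega)$, and then deduce the nontrivial direction of (1) by a mollification and weak-compactness argument. Throughout I use the normalisation $\int_{\mathbb{R}^N} \rho_n\,dx = 1$, which after polar coordinates $h = r\omega$ reads $\omega_{N-1}\int_0^\infty \rho_n(r)\,r^{N-1}\,dr = 1$, where $\omega_{N-1} = \mathcal{H}^{N-1}(S^{N-1})$. The concentration hypothesis says that, for every $\delta>0$, only a vanishing fraction of this mass lives outside the ball $B_\delta$.

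\emph{Smooth case.} For $f \in C^2(\overline{\Omega})$, I would change variables $h = x - y$ and expand by Taylor:
\[
\frac{|f(y+h) - f(y)|^p}{|h|^p} = \Bigl| \nabla f(y) \cdot \tfrac{h}{|h|} \Bigr|^p + o(1), \qquad |h| \to 0.
\]
Concentration of $\rho_n$ forces the relevant $h$'s to cluster at the origin, so the error term is absorbed in the limit. Using polar coordinates and rotational invariance of $d\mathcal{H}^{N-1}$ on $S^{N-1}$, one gets
\[
\int_{\mathbb{R}^N} \Bigl|\nabla f(y) \cdot \tfrac{h}{|h|}\Bigr|^p \rho_n(h)\, dh \;\longrightarrow\; K_{p,N}\,|\nabla f(y)|^p,
\]
with $K_{p,N} = \omega_{N-1}^{-1} \int_{S^{N-1}} |e_1 \cdot \omega|^p\, d\mathcal{H}^{N-1}(\omega)$. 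Smoothness of $\partial\Omega$ controls the boundary error from pairs $(x,y)$ with $y + h \notin \Omega$, since the $\delta$-neighbourhood of $\partial\Omega$ has measure $O(\delta)$.

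\emph{Uniform bound and density.} For $f \in W^{1,p}(\Omega)$ extended to $\mathbb{R}^N$ by a bounded extension operator (available because $\Omega$ is smooth), the identity $f(x) - f(y) = \int_0^1 \nabla f(y + t(x-y))\cdot(x-y)\,dt$, Jensen in $t$, and Fubini produce
\[
\int_\Omega \int_\Omega \frac{|f(x)-f(y)|^p}{|x-y|^p}\rho_n(x-y)\,dx\,dy \;\leq\; C\,\|\nabla f\|_{L^p(\Omega)}^p
\]
with $C$ independent of $n$. A standard $\varepsilon/3$ argument approximating $f$ by $C^\infty(\overline\Omega)$ functions in $W^{1,p}(\Omega)$, combined with the smooth case, then yields (2) for every $f \in W^{1,p}(\Omega)$.

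\emph{Converse.} The ``only if'' direction of (1) is the main obstacle. Suppose the liminf is finite. Fix a standard radial mollifier $\varphi_\varepsilon$, set $f_\varepsilon = f * \varphi_\varepsilon$, and $\Omega_\varepsilon = \{y \in \Omega : \mathrm{dist}(y, \partial\Omega) > \varepsilon\}$. By Jensen,
\[
|f_\varepsilon(x) - f_\varepsilon(y)|^p \leq \int_{B_\varepsilon} |f(x - z) - f(y - z)|^p\, \varphi_\varepsilon(z)\,dz \qquad (x,y \in \Omega_\varepsilon),
\]
and a Fubini + translation-by-$z$ argument gives
\[
\int_{\Omega_\varepsilon}\!\!\int_{\Omega_\varepsilon} \frac{|f_\varepsilon(x)-f_\varepsilon(y)|^p}{|x-y|^p}\rho_n(x-y)\,dx\,dy \;\leq\; \int_\Omega\!\!\int_\Omega \frac{|f(x)-f(y)|^p}{|x-y|^p}\rho_n(x-y)\,dx\,dy.
\]
Applying the smooth case to $f_\varepsilon$ on $\Omega_\varepsilon$ and letting $n \to \infty$ produces a bound on $\|\nabla f_\varepsilon\|_{L^p(\Omega_\varepsilon)}$ which is uniform in $\varepsilon$. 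Weak $L^p$-compactness yields a limit $g \in L^p(\Omega;\mathbb{R}^N)$ along a subsequence, and a standard test-function argument identifies $g$ as the distributional gradient of $f$. Hence $f \in W^{1,p}(\Omega)$.
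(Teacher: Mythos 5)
The paper offers no proof of Theorem \ref{thm:bbm}: it is quoted from \cite{BBM} purely as motivation, so there is no in-paper argument to compare yours against. Your sketch is, in outline, the original Bourgain--Brezis--Mironescu proof and is essentially sound: Taylor expansion plus polar coordinates and rotational invariance giving $K_{p,N}=\dashint_{S^{N-1}}|e_1\cdot\omega|^p\,d\mathcal{H}^{N-1}(\omega)$ for smooth $f$ (with the boundary layer of width $\delta$ contributing $O(\delta)$); the line-integral identity, Jensen and a bounded extension operator for the uniform bound --- note this route naturally yields $C\|f\|^p_{W^{1,p}(\Omega)}$ rather than $C\|\nabla f\|^p_{L^p(\Omega)}$, which is all the $\varepsilon/3$ density argument requires; and mollification, the translation/Jensen domination on $\Omega_\varepsilon$, and weak $L^p$ compactness for the converse, where it is cleaner to invoke only the lower bound $\liminf_n(\cdot)\ge K_{p,N}\int_K|\nabla f_\varepsilon|^p$ on compacta $K\subset\subset\Omega_\varepsilon$, since $\Omega_\varepsilon$ need not itself be a smooth domain. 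The instructive contrast is with the method the paper actually develops for its metric analogue, Theorem \ref{thm:bbmeuclideantangent}: there the Taylor-expansion step is unavailable and is replaced by a blow-up of the rescaled quotients $f_{r,x}$ together with Cheeger's Rademacher theorem (Lemma \ref{lem:remainder} and Proposition \ref{prop:pointwiserademacher}), which the author points out gives a new proof of the pointwise limit even in $\mathbb{R}^N$; the subsequent uniform bound and density-of-Lipschitz-functions step in Theorem \ref{thm:bbmeuclideantangent} plays exactly the role of your second and third paragraphs.
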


A similar result holds for $p = 1$ with the space $BV(\Omega)$ in place of $W^{1,1}(\Omega)$, see \cite{Dav}. Moreover, the authors of \cite{BBM} (see also \cite{Pon}) prove a precompactness result under an additional assumption that $\rho$ is nonincreasing as a function of $r$; this result (or a similar result proved in \cite[Theorem 6.11]{AMRT}) is a standard argument in approximations of local problems via nonlocal ones, see for instance \cite{AMRT, Gor2020JFA}.

A few authors, for instance \cite{dMS} and \cite{MMS}, considered extensions of the first part of Theorem \ref{thm:bbm} to the setting of measure metric spaces. Let $X$ be a metric space equipped with a doubling measure which satisfies the $(1,p)$-Poincar\'e inequality. Consider the following metric analogue of the left hand side of the equality in Theorem \ref{thm:bbm}:
\begin{equation}
Q_{r,p}(f) = \frac{1}{r^p} \int_X \dashint_{B(x,r)} |f(x) - f(y)|^p \, d\nu(y) \, d\nu(x).
\end{equation}
We will call $Q_{r,p}$ the BBM difference quotient. It corresponds to taking the mollifiers $\rho_n$ equal to characteristic functions of balls rescaled by the measure of these balls (see the discussion in Section \ref{sec:averagesonballs}). Then, we ask if a following analogue of Theorem \ref{thm:bbm} holds: there exists a constant $C_{p,X}$ such that for any $f \in W^{1,p}(X,d,\nu)$ we have
\begin{equation}
\lim_{r \rightarrow 0} \, \frac{1}{r^p} \int_X \dashint_{B(x,r)} |f(x) - f(y)|^p \, d\nu(y) \, d\nu(x) = C_{p,X}  \| \nabla f \|_{L^p(X,\nu)}^p
\end{equation}
for a gradient $\nabla f$ understood in an appropriate sense.
In such generality, there is no hope of an exact analogue of the second part of Theorem \ref{thm:bbm}, see Example \ref{ex:gluing}. However, there are some results concerning the first part of Theorem \ref{thm:bbm}, concerning the upper and lower limits of $Q_{r,p}$ and their relationship with the Sobolev structure.

\begin{theorem}
$($\cite[Theorem 3.1]{MMS}$)$ Let $(X,d,\nu)$ be a metric space equipped with a doubling measure which satisfies the $(1,1)$-Poincar\'e inequality. Suppose that $f \in L^1(X,\nu)$. Then
$$ f \in BV(X,d,\nu) \quad \Leftrightarrow \quad \liminf_{r \rightarrow 0} \frac1r \int_X \int_{B(x,r)} \frac{|f(y) - f(x)|}{\sqrt{\nu(B(x,r))} \sqrt{\nu(B(y,r))}} \, d\nu(y) \, d\nu(x) < \infty.$$
In particular, since $\nu$ is doubling, we have
$$ f \in BV(X,d,\nu) \quad \Leftrightarrow \quad \liminf_{r \rightarrow 0} \frac1r \int_X \dashint_{B(x,r)} |f(y) - f(x)| \, d\nu(y) \, d\nu(x) < \infty,$$
see the discussion in \cite{MST1}.
\end{theorem}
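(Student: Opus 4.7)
The plan is to prove the second (simpler) equivalence, from which the first follows at once by doubling: for $y \in B(x,r)$ we have $B(x,r) \subset B(y,2r)$, so $\nu(B(x,r)) \leq C_d\, \nu(B(y,r))$ and symmetrically, whence $\sqrt{\nu(B(x,r))\nu(B(y,r))} \asymp \nu(B(x,r))$ uniformly in $(x,y)$.

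For the direction $f \in BV(X,d,\nu) \Rightarrow$ finite liminf, I would invoke Miranda's characterisation: there exist Lipschitz $f_k \to f$ in $L^1_{\mathrm{loc}}$ with $\liminf_k \int_X \mathrm{Lip}(f_k)\, d\nu = |Df|(X)$. For each Lipschitz $f_k$, inserting $(f_k)_{B(x,r)}$ and applying the $(1,1)$-Poincar\'e inequality gives
$$ \dashint_{B(x,r)} |f_k(y) - f_k(x)|\, d\nu(y) \leq 2 \dashint_{B(x,r)} |f_k(y) - (f_k)_{B(x,r)}|\, d\nu(y) \leq Cr \dashint_{B(x,\lambda r)} \mathrm{Lip}(f_k)\, d\nu. $$
Integrating in $x$ and swapping the order of integration via Fubini (where doubling controls the reciprocal $1/\nu(B(x,\lambda r))$ as $x$ varies over $B(y,\lambda r)$) bounds the BBM quotient at scale $r$ by $C\int_X \mathrm{Lip}(f_k)\, d\nu$. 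Taking $k\to\infty$ via Fatou yields a bound by $C|Df|(X)$, hence finiteness.

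For the converse, assume $M := \liminf_{r \to 0} \tfrac{1}{r} \int_X \dashint_{B(x,r)} |f(y)-f(x)|\, d\nu(y)\, d\nu(x) < \infty$ and choose a realising sequence $r_k \to 0$. Define the Lipschitz regularisations $f_k(x) := \dashint_{B(x,r_k)} f\, d\nu$, which converge to $f$ in $L^1_{\mathrm{loc}}$ by Lebesgue's differentiation theorem. The heart of the argument is to construct, for each $k$, a function $g_k \in L^1(X,\nu)$ that serves as an upper gradient of $f_k$ and satisfies $\int_X g_k\, d\nu \leq CM$. A natural candidate is
$$ g_k(x) := \frac{C}{r_k} \dashint_{B(x, A r_k)} |f(y) - f(x)|\, d\nu(y), $$
for a sufficiently large absolute constant $A$; by Fubini $\int_X g_k\, d\nu \leq CM$ (the dilated radius costs only a doubling constant). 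One then verifies that $Cg_k$ controls the oscillation of $f_k$ along short chains by chaining through the dyadic radii $Ar_k, Ar_k/2, Ar_k/4, \ldots$ and applying the $(1,1)$-Poincar\'e inequality at each step, which gives a weak upper gradient in the Miranda sense. Sending $k \to \infty$ produces $f \in BV(X,d,\nu)$ with $|Df|(X) \leq CM$.

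The principal obstacle is precisely the construction of the upper gradient $g_k$ in the converse direction. In the metric setting no pointwise differentiation is available, so one must argue through a chaining procedure along a decreasing sequence of radii; the delicate point is to ensure that the cumulative constants coming from repeated application of the Poincar\'e inequality and doubling remain bounded independently of $k$, so that the resulting $L^1$-norm of $g_k$ stays comparable to the scale-$r_k$ BBM quotient rather than blowing up as $r_k \to 0$.
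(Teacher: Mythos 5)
First, a point of order: the paper does not prove this theorem --- it is quoted from \cite[Theorem 3.1]{MMS} as background, so there is no in-paper proof to compare your attempt against. Judged on its own terms, your architecture (reduce the weighted form to the averaged form by doubling; Poincar\'e plus Fubini for the forward implication; mollification at scale $r_k$ plus lower semicontinuity for the converse) is the standard one from the cited literature, and the doubling reduction is correct.

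There are, however, genuine gaps in both directions. In the forward direction, the inequality $\dashint_{B(x,r)}|f_k(y)-f_k(x)|\,d\nu(y)\le 2\dashint_{B(x,r)}|f_k(y)-(f_k)_{B(x,r)}|\,d\nu(y)$ is false pointwise: the triangle inequality leaves the extra term $|f_k(x)-(f_k)_{B(x,r)}|$, which is not dominated by the mean oscillation of $f_k$ on $B(x,r)$ (think of a narrow spike at $x$). The standard repair is to insert $f_{B(x,2r)}$ instead, bound $\int_X|f_k(x)-(f_k)_{B(x,2r)}|\,d\nu(x)$ by telescoping over dyadic radii at Lebesgue points with Poincar\'e at each scale, and only then apply Fubini as you describe. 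In the converse direction, the central claim --- that $g_k(x)=\frac{C}{r_k}\dashint_{B(x,Ar_k)}|f(y)-f(x)|\,d\nu(y)$ is an upper gradient of $f_k$ with $\int_X g_k\,d\nu\le CM$ --- is asserted rather than proved, and even the norm bound is already problematic as stated: $\int_X g_k\,d\nu$ is, up to constants, the BBM quotient at scale $Ar_k$, not $r_k$; since $\dashint_{B(x,s)}|f-f(x)|\,d\nu$ is not pointwise monotone in $s$ and the liminf is realised only along the specific sequence $r_k$, passing from radius $r_k$ to $Ar_k$ is \emph{not} a doubling-constant issue but already requires the chaining lemma you defer. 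Two further technical points: $x\mapsto\dashint_{B(x,r_k)}f\,d\nu$ need not be Lipschitz (one should use discrete convolutions built from a Lipschitz partition of unity subordinate to an $r_k$-cover), and the passage from ``Lipschitz approximations with uniformly $L^1$-bounded upper gradients'' to $f\in BV(X,d,\nu)$ must invoke the lower semicontinuity/compactness characterisation of metric BV explicitly. In short, the skeleton is right, but the step you yourself identify as the principal obstacle is precisely the content of the hard direction and remains unproved.
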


The proof given in \cite{MMS} with minor modifications can also be used to provide a characterisation of the Sobolev space $W^{1,p}(X,d,\nu)$ via the lower limit of $Q_{r,p}$. A similar characterisation for $p > 1$, which also arises from taking a particular kernel $\rho_n$ in Theorem \ref{thm:bbm} and involves the limit of fractional Sobolev norms, was proved in \cite{dMS}.

In this paper, we concentrate on the metric analogues of the second part of Theorem \ref{thm:bbm}, namely on the existence and exact value of the constant $C_{p,X}$. We focus on the case $p > 1$ in order to be able to use the equivalence of different definitions of Sobolev spaces and the density of Lipschitz functions in the Sobolev norm. We consider measure metric spaces that locally look like Euclidean spaces; to be more precise, we consider spaces such that their tangents (in the Gromov-Hausdorff sense) for $\nu$-a.e. $x \in X$ are Euclidean spaces with a fixed dimension $N$. This class contains for instance Riemannian manifolds, weighted Euclidean spaces for continuous weights bounded from below and (as was shown in \cite{BS}) RCD$(K,N)$ spaces. In absence of scaling and Taylor formula that are avalaible to us in the Euclidean case, we will use a blow-up technique and a version of the Rademacher theorem in their place.

The structure of the paper is as follows. In Section \ref{sec:2preliminaries} we recall the necessary notions, such as the (equivalent) definitions of Sobolev spaces on a metric measure space, Gromov-Hausdorff convergence and the Rademacher theorem. In Section \ref{sec:3mainresults}, we start by proving a pointwise result (valid $\nu$-a.e.) in the spirit of Theorem \ref{thm:bbm} for Lipschitz functions and then prove the main result of the paper, Theorem \ref{thm:bbmeuclideantangent}: \\
\\
{\bf Theorem \ref{thm:bbmeuclideantangent}} {\it Suppose that $(X,d,\nu)$ is a complete, separable, doubling metric measure space which supports a $(1,p)$-Poincar\'e inequality. Suppose additionally that $X$ has Euclidean tangents of dimension $N$ for $\nu$-a.e. $x \in X$. Let $f \in W^{1,p}(X,d,\nu)$, where $p \in (1,\infty)$.
Then
\begin{equation}\label{eq:introduction}
\lim_{r \rightarrow 0} \, \frac{1}{r^p} \int_{X} \dashint_{B(x,r)} |f(x) - f(y)|^p \, d\mathcal{L}^N(y) \, d\mathcal{L}^N(x) = C_{p,N} \cdot \mbox{Ch}_p(f),
\end{equation}
where $\mbox{Ch}_p(f)$ is the Cheeger energy of $f$ defined in \eqref{eq:cheeger} and $C_{p,N}$ is the constant defined in \eqref{eq:constantkpnprime}.}

In particular, the constant $C_{p,X}$ does not depend on the space $X$ itself, only on the dimension of the tangent space, so we denote it by $C_{p,N}$. Finally, in Section \ref{sec:4commentsandextensions}, we comment on the relationship of results from Section \ref{sec:3mainresults} with existing literature and discuss some extensions of the framework under which they are valid; in particular, we prove an analogue of Theorem \ref{thm:bbmeuclideantangent} when the tangent is the Heisenberg group and use it to construct Example \ref{ex:gluing} showing that if the tangent space varies from point to point, then equation \eqref{eq:introduction} may no longer be true.

\section{Preliminaries}\label{sec:2preliminaries}

\subsection{Sobolev spaces on a metric space}

Let $(X,d,\nu)$ be a metric measure space. In the whole paper, we will work under the  standard assumptions that the measure $\nu$ is doubling and the space supports a $(1,p)$-Poincar\'e inequality. We say that the measure $\nu$ is doubling, if there exists a constant $c_D$ such that for all $x \in X$ and all $r > 0$ we have
\begin{equation}
0 < \nu(B(x,2r)) \leq c_D \, \nu(B(x,r)) < \infty.
\end{equation}
Given $f: X \rightarrow \mathbb{R}$, we define its slope (also called the local Lipschitz constant of $f$) by the formula
\begin{equation*}
\mbox{Lip}(f)(x) = \limsup_{y \rightarrow x} \frac{|f(y) - f(x)|}{d(x,y)}.
\end{equation*}
We say that the metric measure space $(X,d,\nu)$ supports a $(1,p)$-Poincar\'e inequality, if there exist constants $c_P$ and $\Lambda$ such that for all $f \in \mbox{Lip}(X)$ and $r > 0$ we have
\begin{equation}
\dashint_{B(x,r)} \bigg| f - \bigg(\dashint_{B(x,r)} f \, d\nu \bigg) \bigg| \, d\nu \leq c_P \, r \, \bigg( \dashint_{B(x,\Lambda r)} (\mbox{Lip}(f))^p \, d\nu \bigg)^{1/p}. 
\end{equation}
In this paper, we will work in the setting in which the several known notions of Sobolev spaces defined on a metric space are equivalent; for completeness, we present an ``H type'' definition via approximation by Lipschitz functions. 
\begin{definition}
Let $p \in (1,\infty)$. We say that $g \in L^p(X,\nu)$ is a $p$-relaxed slope of $f \in L^p(X,\nu)$, if there exist $\widetilde{g} \in L^p(X,\nu)$ and Lipschitz functions $f_n \in L^p(X,\nu) \cap \mbox{Lip}(X)$ such that: \\
$(1)$ $f_n \rightarrow f$ in $L^p(X,\nu)$ and $Lip(f_n) \rightharpoonup \widetilde{g}$ weakly in $L^p(X,\nu)$; \\ 
$(2)$ $\widetilde{g} \leq g$ $\nu$-a.e. in $X$. \\
We say that $g$ is the minimal $p$-relaxed slope of $f$ if its norm in $L^p(X,\nu)$ is minimal among $p$-relaxed slopes. We will denote the minimal $p$-relaxed slope by $|\nabla f|_{*,p}$. 
\end{definition}

The definition of minimal $p$-relaxed slope is well-posed thanks to Mazur's lemma and uniform convexity of $L^p(X,\nu)$, see the discussion after \cite[Definition 4.2]{AGS}. Using the minimal $p$-relaxed slope, for $p \in (1,\infty)$ define the Cheeger energy as
\begin{equation}\label{eq:cheeger}
\mbox{Ch}_p(f) = \int_X |\nabla f|_{*,p}^p \, d\nu.
\end{equation}
 
\begin{definition}
Fix $p \in (1,\infty)$. Let $f \in L^p(X,\nu)$. We say that $f \in W^{1,p}(X,d,\nu)$, the Sobolev space of functions with a $p$-relaxed slope, if there exists a $p$-relaxed slope of $f$. The space $W^{1,p}(X,d,\nu)$ is endowed with the norm
\begin{equation}
\| f \|_{W^{1,p}(X,d,\nu)} = \bigg( \| u \|_{L^p(X,\nu)}^p + \mbox{Ch}_p(f) \bigg)^{1/p}.
\end{equation}
\end{definition}

Under the assumptions that $\nu$ is doubling and the space supports a $(1,p)$-Poincar\'e inequality, the space $W^{1,p}(X,d,\nu)$ is reflexive and bounded Lipschitz functions with bounded support form a dense subset $($see \cite[Corollary 7.5, Proposition 7.6]{ACdM}$)$. The space $W^{1,p}(X,d,\nu)$ can equivalently be defined in a few other ways: instead of the $p$-relaxed slope $|\nabla f|_{*,p}$, we may use the Cheeger's gradient $|\nabla f|_{C,p}$, the $p$-upper gradient $|\nabla|_{S,p}$ or the minimal $p$-weak upper gradient $|\nabla|_{w,p}$; for these equivalent definitions (all the above gradients agree $\nu$-a.e. in $X$) and the proof of the equivalence see \cite{AGS}.

In the proofs in Section \ref{sec:3mainresults}, we are going to use one more equivalence of Sobolev spaces - with the Hajlasz-Sobolev space $M^{1,p}(X)$ (see Lemma \ref{lem:equivalenceofspaces}). While the norms in $W^{1,p}(X,d,\nu)$ and $M^{1,p}(X)$ do not necessarily agree, classical arguments using maximal functions (for instance, combine \cite[Theorem 4.5]{KM} and \cite[Theorem 1.0.1]{KZ}) imply the following Lemma concerning the equivalence of these spaces.

\begin{lemma}\label{lem:equivalenceofspaces}
Let $p \in (1,\infty)$. Suppose that $(X,d,\nu)$ is a doubling metric measure space which supports a $(1,p)$-Poincar\'e inequality. Then, for any $f \in W^{1,p}(X,d,\nu)$ there exists $g \in L^p(X,\nu)$ such that
\begin{equation}
|f(x) - f(y)| \leq d(x,y) \, (g(x) + g(y))
\end{equation}
for $\nu$-a.e. $x,y \in X$ $($in other words, $f$ is in the Hajlasz-Sobolev space $M^{1,p}(X))$. Moreover, we can choose $g$ such that $\| g \|_{L^p(X,\nu)}^p \leq C \cdot \mbox{Ch}_p(f)$.
\end{lemma}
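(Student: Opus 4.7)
The plan is to first reduce the statement to working with $p$-weak upper gradients, and then to combine the Keith-Zhong self-improvement of the Poincar\'e inequality with a standard telescoping argument in order to obtain a pointwise Hajlasz-type estimate controlled by a maximal function of a suitable power of the gradient.

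First, I would invoke the equivalence of the several notions of Sobolev spaces recalled just above (from \cite{AGS}): since $(X,d,\nu)$ is doubling and supports a $(1,p)$-Poincar\'e inequality with $p > 1$, the minimal $p$-relaxed slope $|\nabla f|_{*,p}$ coincides $\nu$-a.e. with the minimal $p$-weak upper gradient $|\nabla f|_{w,p}$, which I will denote by $g_f$. In particular $\mbox{Ch}_p(f) = \|g_f\|_{L^p(X,\nu)}^p$, so any estimate in terms of $\|g_f\|_{L^p}^p$ automatically becomes an estimate in terms of $\mbox{Ch}_p(f)$.

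Next, by the Keith-Zhong self-improvement theorem \cite[Theorem 1.0.1]{KZ}, the $(1,p)$-Poincar\'e inequality self-improves to a $(1,q)$-Poincar\'e inequality for some $q \in (1,p)$. Combining this with the standard telescoping argument along the chain of balls $B(x, 2^{-k} d(x,y))$ and the corresponding chain centred at $y$ (cf.\ \cite[Theorem 4.5]{KM}), one obtains the pointwise estimate
\[
|f(x) - f(y)| \leq C \, d(x,y) \, \bigl( (M(g_f^q)(x))^{1/q} + (M(g_f^q)(y))^{1/q} \bigr)
\]
for $\nu$-a.e. $x, y \in X$, where $M$ denotes the Hardy-Littlewood maximal function associated to $\nu$.

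Setting $g(x) := C \, (M(g_f^q)(x))^{1/q}$ then yields precisely the Hajlasz-Sobolev pointwise inequality, and it only remains to verify the $L^p$-bound on $g$. Since $p/q > 1$ and $\nu$ is doubling, the Hardy-Littlewood maximal operator is bounded on $L^{p/q}(X,\nu)$, so applying it to $g_f^q \in L^{p/q}(X,\nu)$ gives
\[
\|g\|_{L^p(X,\nu)}^p = C^p \int_X (M(g_f^q))^{p/q} \, d\nu \leq C' \int_X g_f^p \, d\nu = C' \, \mbox{Ch}_p(f).
\]
The main obstacle I expect is precisely the use of the Keith-Zhong self-improvement: without reducing the exponent on $g_f$ from $p$ to some $q<p$ we would only be able to bound $|f(x)-f(y)|$ by $M(g_f^p)^{1/p}$ at the two endpoints, and $M(g_f^p)$ is merely weakly $L^1$, which would not be enough to place $g$ in $L^p$.
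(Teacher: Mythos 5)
Your argument is correct and is precisely the one the paper intends: the text above the lemma explicitly says to combine \cite[Theorem 4.5]{KM} with the Keith--Zhong self-improvement \cite[Theorem 1.0.1]{KZ}, which is exactly your route of lowering the exponent to some $q<p$ via self-improvement, telescoping to get the pointwise bound by $(M(g_f^q))^{1/q}$, and then using boundedness of the maximal operator on $L^{p/q}$. The only point worth flagging is that Keith--Zhong requires completeness of the space, which the lemma's hypotheses do not state explicitly (though the paper's main theorem does assume it).
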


\subsection{Tangents of a metric space}\label{sec:tangentsintro}

Let us recall the definition of pointed measured Gromov-Hausdorff convergence of metric spaces (first introduced in \cite{GLP}; there are many equivalent ways to define it in the literature, we use a variant from \cite{David}). 

\begin{definition}
A map $\phi: (X_1, x_1, d_1) \rightarrow (X_2, x_2, d_2)$ between two metric spaces with a distinguished point is called an $\varepsilon$-isometry if
\begin{equation}\label{eq:epsilonisometryone}
|d_2(\phi(x), \phi(y)) - d_1(x,y)| \leq \varepsilon
\end{equation}
for all $x,y \in B(x, \varepsilon^{-1})$ and we have
\begin{equation}\label{eq:epsilonisometrytwo}
B_{d_2}(y,r-\varepsilon) \subset N_\varepsilon(\phi(B_{d_1}(x,r)))
\end{equation}
for all $r \in [\varepsilon^{-1},\varepsilon]$. Here, $N_\varepsilon(E)$ denotes the open $\varepsilon$-neighbourhood of a set $E \subset X_2$.
\end{definition}

In particular, we do not necessarily have that $\phi(x_1) = x_2$, but the properties of an $\varepsilon$-isometry imply that $d_2(\phi(x_1),x_2) \leq 2\varepsilon$.

\begin{definition}
A sequence of pointed metric spaces $(X_n, x_n, d_n)$ converges in {\it pointed Gromov-Hausdorff sense} to $(X,x,d)$ if there exists a sequence $\varepsilon_n \rightarrow 0$ such that there exist $\varepsilon_n$-isometries $\phi_n: X_n \rightarrow X$ and $\psi_n: X \rightarrow X_n$.

Moreover, we say that $(X_n, x_n, d_n, \nu_n)$ converges in {\it measured pointed Gromov-Hausdorff sense} to $(X,x,d,\nu)$, if additionally $(\phi_n)_{\#} \nu_n \rightharpoonup \nu$ weakly as measures on $X$.
\end{definition}

\begin{definition}\label{dfn:measuredGH}
Let $(X,x,d,\nu)$ be a pointed metric measure space. A tangent cone at $x$ is a pointed metric space $(X_\infty, x_\infty, d_\infty, \nu_\infty)$, which is a measured pointed Gromov-Hausdorff limit of some sequence $(X,x,r_n^{-1}d, \nu_{r_n})$, where $r_n \rightarrow 0$ and
\begin{equation}
\nu_r = \frac{1}{\nu(B(x,r))} \nu.
\end{equation}
In the literature the renormalised limit measure $\mu_\infty$ is sometimes omitted in the definition of tangent cones; here, we follow \cite{Che} and include it, since we want to use a version of Rademacher's theorem.
\end{definition}

On complete metric spaces equipped with a doubling measure tangent cones exist for all $x \in X$, see \cite{Che}, but they are not necessarily unique. A key assumption we will use is that for $\nu$-a.e. $x \in X$ the tangent cones are unique and are Euclidean spaces of fixed dimension $N$. In this case, we will drop the sequence $r_n$ and simply index the blow-ups of the space $X$ by $r \in (0, \infty)$.

\subsection{Rademacher theorem} The core of the proofs in the next Section is a version of the Rademacher theorem for metric measure spaces which satisfy the doubling property and the $(1,p)$-Poincar\'e inequality. To this end, we introduce the following notation.

{\bf Notation.} Set $\phi_r: X \rightarrow X_\infty$ to be the Gromov-Hausdorff approximation. Given a function $f \in \mbox{Lip}(X)$, we denote
\begin{equation}
f_{r,x}(y) = \frac{f(y) - f(x)}{r}.
\end{equation}
We have $f_{r,x}(y) \in \mbox{Lip}(X)$; moreover, if $L$ is the Lipschitz constant of $f$, then the Lipschitz constant of $f_{r,x}$ is at most $\frac{L}{r}$ and $|f_{r,x}|$ is bounded by $L$ on the ball $B(x,r)$. If we rescale the metric $d$ to $r^{-1}d$, then $f_{r,x}$ has Lipschitz constant at most $L$, is locally bounded and is bounded by $L$ on the ball with radius one; hence, it admits a convergent subsequence (still denoted by $f_{r,x}$) such that $f_{r,x}$ converge locally uniformly to a function $f_{0,x} \in \mbox{Lip}(X_\infty)$ (modulo the identification of $X$ as a subset of $X_\infty$ via $\phi_r$), namely on $B(x,r)$ we have
\begin{equation}\label{eq:alpharestimate}
\| f_{0,x}(\phi_r(\cdot)) - f_{r,x}(\cdot) \|_\infty \leq \alpha(r),
\end{equation}
where $\alpha(r) \rightarrow 0$ as $r \rightarrow 0$. Moreover, the Lipschitz constant of $f_{0,x}$ is at most $L$ and it is bounded by $L$ on the ball $B(x_\infty, 1)$.

Now, we recall the concept of generalised linear functions as introduced in \cite{Che}. Denote by $g_f$ the minimal upper gradient of a function $f \in W^{1,p}(X,d,\nu)$.

\begin{definition}
Let $p \in (1,\infty)$. A Lipschitz function $l \in \mbox{Lip}(X)$ is generalised linear if: \\
(1) $l \equiv 0$ or range $l = (-\infty,\infty)$; \\
(2) $l$ is p-harmonic, in the sense that for any $V \subset \subset X$ we have
\begin{equation}
\int_V |g_l|^p \leq \int_V |g_{l+f}|^p
\end{equation}
for all functions $f \in W^{1,p}(X,d,\nu)$ with support in $V$; \\
(3) $g_l \equiv c$ for some $c \in \mathbb{R}$.
\end{definition}

If $X$ is the Euclidean space, then generalised linear functions are affine, see \cite[Theorem 8.11]{Che}.

\begin{theorem}\label{thm:rademacher}
$($\cite[Theorem 10.2]{Che}$)$ Suppose that $(X,x,d,\nu)$ is a pointed metric measure space. Suppose that $\nu$ is doubling and satisfies the $(1,p)$-Poincar\'e inequality for some $p \in (1,\infty)$. Let $f \in \mbox{Lip}(X)$ Then, for $\nu$-a.e. $x \in X$ the function $f$ is infinitesimally generalised linear, i.e. for all $p' > p$ any $f_{0,x}$ as above is a generalised linear function. Moreover, we have $\mbox{Lip } f_{0,x} = \mbox{Lip}(f)(x)$.
\end{theorem}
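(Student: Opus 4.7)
The plan is to combine a blow-up procedure with Lebesgue's differentiation theorem for the doubling measure $\nu$ and the stability properties of upper gradients under pointed measured Gromov-Hausdorff convergence. First, I would fix a full-measure set of \emph{good} points: by the doubling assumption, $\nu$-a.e.\ $x \in X$ is simultaneously (i) a Lebesgue point of the minimal $p$-weak upper gradient $g_f^p$ and of $(\mathrm{Lip}(f))^p$; (ii) a point at which $g_f(x) = \mathrm{Lip}(f)(x)$, which holds $\nu$-a.e.\ under the doubling/Poincar\'e assumptions by the identification of the minimal upper gradient with the slope for Lipschitz functions; and (iii) a point at which the tangent exists in the pmGH sense (this is always true on a set of full measure, but for the purposes of the statement we do not yet require uniqueness of the tangent). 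All arguments below are carried out at such a point $x$.

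Second, I would set up the compactness of the family $\{f_{r,x}\}_{r>0}$ on the rescaled spaces $(X, r^{-1}d, \nu_r, x)$. The preliminary discussion already gives that $f_{r,x}$ is $L$-Lipschitz with respect to $r^{-1}d$, vanishes at $x$, and is bounded by $L$ on the unit ball; therefore a metric Arzel\`a-Ascoli statement, compatible with the $\varepsilon_r$-isometries $\phi_r$, yields a (subsequential) locally uniform limit $f_{0,x} \in \mathrm{Lip}(X_\infty)$ with Lipschitz constant at most $L$, and with the quantitative bound \eqref{eq:alpharestimate}. The scaling of the Poincar\'e inequality is preserved under the blow-up (the constants $c_P$, $\Lambda$ are scale-invariant), and the rescaled measures $\nu_r$ converge weakly to $\nu_\infty$; so the limit space $(X_\infty, d_\infty, \nu_\infty)$ also supports a $(1,p)$-Poincar\'e inequality with the same constants.

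Third, the main content is to identify $f_{0,x}$ as a generalised linear function on $X_\infty$. For the constancy of $g_{f_{0,x}}$, I would invoke stability of minimal upper gradients under pmGH-convergence: for any ball $B(y_\infty, s) \subset X_\infty$,
\begin{equation}
\int_{B(y_\infty,s)} g_{f_{0,x}}^p \, d\nu_\infty \le \liminf_{r \to 0} \int_{B_{r^{-1}d}(x,s)} g_{f_{r,x}}^p \, d\nu_r,
\end{equation}
while the right-hand side equals $\liminf_{r \to 0} \dashint_{B(x,rs)} g_f^p \, d\nu \cdot \nu_\infty(B(y_\infty, s))$ (up to vanishing error), which by the Lebesgue point property tends to $g_f(x)^p \, \nu_\infty(B(y_\infty,s))$. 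A matching upper bound, obtained by lifting test variations from $X_\infty$ to $X$ via $\psi_r$, then forces $g_{f_{0,x}}^p \equiv g_f(x)^p$. The $p$-harmonicity on $X_\infty$ comes from the same Mosco-type argument: given a compactly supported perturbation $\phi$ on $X_\infty$, pull it back using $\psi_r$, use the minimality inequality that $f$ satisfies \emph{at scale} $r$ (here using that $f$ is Lipschitz, so comparing $\int g_f^p$ with $\int g_{f + r\phi\circ\phi_r}^p$ on $B(x,rs)$ is automatic), and then take $r \to 0$ combining lower semicontinuity on the left with the already-established identification of the limit energy on the right. Finally, the dichotomy in condition (1) is then automatic: if $g_f(x) = 0$, the constancy of $g_{f_{0,x}}$ with value $0$ combined with $f_{0,x}(x_\infty) = 0$ forces $f_{0,x} \equiv 0$; if $g_f(x) > 0$, then any $p$-harmonic function whose minimal upper gradient is a positive constant cannot be bounded on $X_\infty$, and combined with $p$-harmonicity surjects onto $\mathbb{R}$. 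The identity $\mathrm{Lip}\, f_{0,x} = \mathrm{Lip}(f)(x)$ is then $\| g_{f_{0,x}} \|_\infty = g_f(x) = \mathrm{Lip}(f)(x)$.

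The main obstacle is the stability of $p$-harmonicity and of minimal upper gradients under pmGH-convergence with only doubling and $(1,p)$-Poincar\'e hypotheses. This requires a careful lifting construction between $X$ and $X_\infty$ via the $\varepsilon_r$-isometries that does not destroy Lipschitz bounds, plus a $\Gamma$-convergence statement for the $p$-Cheeger energies. In particular, the upper-bound inequality in the $p$-harmonicity argument is where the Poincar\'e inequality is used most essentially: it is what converts local weak compactness of gradients into strong control of the limiting energy, and it is what prevents pathological limits with smaller energy than expected from the Lebesgue-point computation.
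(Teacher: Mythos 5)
First, a point of comparison: the paper does not prove this statement at all --- Theorem \ref{thm:rademacher} is imported verbatim from \cite[Theorem 10.2]{Che} --- so your proposal has to be measured against Cheeger's original argument. Your opening steps (a full-measure set of Lebesgue points of $g_f^p$, the a.e.\ identification $g_f=\mbox{Lip}(f)$, Arzel\`a--Ascoli compactness of the blow-ups compatible with the $\varepsilon_r$-isometries, stability of doubling and of the Poincar\'e inequality under rescaling) do match the scaffolding of Cheeger's proof, and your computation $\lim_{r\to 0}\int_{B_{r^{-1}d}(x,s)} g_{f_{r,x}}^p\,d\nu_r = g_f(x)^p\,\nu_\infty(B(x_\infty,s))$ at a Lebesgue point is correct; combined with lower semicontinuity of the energy it yields the one-sided bound $g_{f_{0,x}}\le g_f(x)$ $\nu_\infty$-a.e. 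The genuine gap is in everything after that. You reduce both the constancy $g_{f_{0,x}}\equiv g_f(x)$ and the harmonicity of $f_{0,x}$ to ``the minimality inequality that $f$ satisfies at scale $r$'', which you declare ``automatic'' because $f$ is Lipschitz. It is not automatic, and it is false for a general Lipschitz function at a general point: Lipschitzness confers no energy-minimality at any scale (on $\mathbb{R}$ the function $f(t)=|t|$ has constant minimal upper gradient but is not $p$-harmonic near $0$, and it is its own blow-up there). The assertion that $f$ is \emph{asymptotically} minimal at $\nu$-a.e.\ point is precisely the content of the theorem being proved, not an input to it, so your harmonicity step is circular, and the same defect voids the ``matching upper bound by lifting test variations via $\psi_r$'' that is supposed to force $g_{f_{0,x}}^p\equiv g_f(x)^p$. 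Cheeger's route to the missing lower bound is of a different nature: he relies on the a.e.\ identities $\mathrm{lip}\,f=\mbox{Lip}(f)=g_f$ (his Theorem 6.1, including the \emph{lower} pointwise Lipschitz constant, which your sketch never uses), applies an Egorov-type uniformization so that the growth $\sup_{B(y,\rho)}|f-f(y)|\ge (g_f(x)-\varepsilon)\rho$ holds at uniform scales for most points $y$ near $x$, and transfers this growth to $f_{0,x}$ through the uniform convergence \eqref{eq:alpharestimate}; the rigidity of a $g_f(x)$-Lipschitz function with maximal growth at a dense set of points and at all scales is what yields generalised linearity, including the dichotomy in condition (1).

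A second, smaller but real, omission: the statement asserts generalised linearity only for exponents $p'>p$, while your sketch works throughout at the endpoint exponent $p$. The strict inequality is where Cheeger absorbs the error terms of the limiting argument, and a proof that worked at $p'=p$ would be strictly stronger than \cite[Theorem 10.2]{Che} --- a warning sign that an error-absorption step is missing from your outline. Finally, two assertions you make without justification: the blow-up limit supports a $(1,p)$-Poincar\'e inequality, but not necessarily ``with the same constants''; and the claim that a $p'$-harmonic function with positive constant minimal upper gradient must have range $(-\infty,\infty)$ is nontrivial (in \cite{Che} it emerges from the geodesic and Busemann-function analysis that this paper recalls in Section \ref{sec:carnotgroupstep2}) and should be cited rather than treated as automatic.
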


\section{Bourgain-Brezis-Mironescu approach}\label{sec:3mainresults}

In this Section, we deal with metric measure spaces $(X,d,\nu)$ which have Euclidean tangents $\nu$-a.e., i.e.
$$ (X, x, r^{-1}d, \nu_r) \rightarrow (X_\infty, x_\infty, d_\infty, \nu_\infty) = (\mathbb{R}^N, 0, \| \cdot \|, c_N \mathcal{L}^N)$$
in the measured Gromov-Hausdorff sense, where constant $c_N = \frac{1}{\mathcal{L}^N(B(0,1))}$, so that the measure of the unit ball equals one (this is a consequence of the definition of $\nu_r$). This is the case for instance for Riemannian manifolds and (as shown in \cite{BS}) RCD$(K,N)$ spaces. Another important class of examples are weighted Euclidean spaces.

\begin{example}
Let $(X,x,d,\nu) = (\mathbb{R}^N,x,\| \cdot \|, w\, \mathcal{L}^N)$, where $w \in L^1_{loc}(\mathbb{R}^N)$ is continuous $\mathcal{L}^N$-a.e. and $\mathcal{L}^N$-a.e. we have $w \geq c > 0$. Choose $x \in X$ which satisfies these conditions and define $\phi_r: (\mathbb{R}^N,x,r^{-1} \| \cdot \|) \rightarrow (\mathbb{R}^N,0,\| \cdot \|)$ by the formula $\phi_r(y) = \frac{y-x}{r}$ and notice that it is an isometry (with an inverse which is also an isometry) which maps $x$ to $0$, so the spaces $(\mathbb{R}^N,x,r^{-1} \| \cdot \|)$ converge in the Gromov-Hausdorff sense to $(\mathbb{R}^N,0,\| \cdot \|)$. Moreover, a quick calculation shows that
\begin{equation*}
(\phi_r)_{\#} \nu_r (z) = \frac{r^N \, w(x + rz)}{\int_{B(x,r)} w \, d\mathcal{L}^N}  \, d\mathcal{L}^N(z) = c_N \, \frac{w(x + rz)}{\dashint_{B(x,r)} w \, d\mathcal{L}^N}  \, d\mathcal{L}^N(z) \rightharpoonup c_N \mathcal{L}^N.
\end{equation*}
Hence, $(\mathbb{R}^N,0,\| \cdot \|, c_N \mathcal{L}^N)$ satisfies all the conditions given in Definition \ref{dfn:measuredGH} for any subsequence $r_n \rightarrow 0$, so $(X,x,d,\nu)$ has Euclidean tangents $\nu$-a.e.
\end{example}

The goal of this Section is to prove Theorem \ref{thm:bbmeuclideantangent}, which is an equivalent of Theorem \ref{thm:bbm} in the metric setting.The outline of the proof is in a way similar to the proof of Theorem \ref{thm:bbm} shown in \cite{BBM}: first, we prove a pointwise result for a dense subset of the Sobolev space which contains functions which are regular enough, and then integrate this result over the whole space and prove that the limiting process is well defined. Here, we further break this reasoning into separate results in order to underline the moment when we use the assumption that the tangent spaces are Euclidean. 

\begin{lemma}\label{lem:remainder}
Suppose that $(X,d,\nu)$ is a doubling metric measure space, which satisfies the $(1,p)$-Poincar\'e inequality for some $p \in (1,\infty)$. Let $x \in X$ be a point such that the implication in the Rademacher theorem (Theorem \ref{thm:rademacher}) holds. Then, in the notation introduced in Section \ref{sec:tangentsintro}, we have
\begin{equation}\label{eq:remainderestimate}
\lim_{r \rightarrow 0} \, \bigg(\int_{B(x,r)} |f_{r,x}(y)|^p \, d\nu_r(y) - \int_{B(x_\infty,1)} |f_{0,x}(z)|^p \, d(\phi_r)_{\#}\nu_r(z) \bigg) = 0.
\end{equation}
\end{lemma}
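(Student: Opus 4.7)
The plan is to rewrite both integrals as integrals over $X$ against the common measure $\nu_r$, after which their difference splits into a ``same domain, different integrand'' piece controlled by \eqref{eq:alpharestimate} and a ``boundary layer'' piece controlled by the $\varepsilon_r$-isometry properties of $\phi_r$. Concretely, the change of variables formula for pushforwards gives
$$\int_{B(x_\infty,1)} |f_{0,x}(z)|^p \, d(\phi_r)_{\#}\nu_r(z) = \int_{\phi_r^{-1}(B(x_\infty,1))} |f_{0,x}(\phi_r(y))|^p \, d\nu_r(y),$$
so, setting $A := B(x,r)$ and $B := \phi_r^{-1}(B(x_\infty,1))$, the quantity whose limit we want to compute decomposes as
$$\int_A \bigl(|f_{r,x}|^p - |f_{0,x} \circ \phi_r|^p\bigr) d\nu_r \; + \; \int_{A \setminus B} |f_{r,x}|^p \, d\nu_r \; - \; \int_{B \setminus A} |f_{0,x} \circ \phi_r|^p \, d\nu_r.$$

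For the first piece I would apply the elementary bound $\bigl||a|^p - |b|^p\bigr| \leq p \max(|a|,|b|)^{p-1}|a-b|$ together with the Lipschitz estimate $|f_{r,x}| \leq L$ on $A$ and the uniform convergence \eqref{eq:alpharestimate}, yielding a bound of order $p(L+\alpha(r))^{p-1} \alpha(r) \cdot \nu_r(A) = p(L+\alpha(r))^{p-1}\alpha(r)$, which tends to zero. For the remaining two pieces, both integrands are uniformly bounded by $(L+\alpha(r))^p$, so it suffices to show that $\nu_r(A \triangle B) \to 0$. Unwinding the $\varepsilon_r$-isometry inequalities \eqref{eq:epsilonisometryone}--\eqref{eq:epsilonisometrytwo} and the estimate $d_\infty(\phi_r(x), x_\infty) \leq 2\varepsilon_r$ traps this symmetric difference inside the thin annulus $B(x, r(1+C\varepsilon_r)) \setminus B(x, r(1-C\varepsilon_r))$.

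The main obstacle is thus showing that the $\nu_r$-mass of this annulus vanishes as $r \to 0$. I would handle this via the weak convergence $(\phi_r)_{\#}\nu_r \rightharpoonup \nu_\infty$: testing against continuous cut-off functions that equal $1$ on $\overline{B}(x_\infty,1-\delta)$ and vanish outside $B(x_\infty,1+\delta)$, then sending $r \to 0$ first and $\delta \to 0$ afterwards, forces $(\phi_r)_{\#}\nu_r(B(x_\infty,1)) \to \nu_\infty(B(x_\infty,1)) = 1$. Since $\nu_r(A) = 1$ by construction, the annular mass must vanish in the limit. In the setting of Section \ref{sec:3mainresults}, where $\nu_\infty = c_N \mathcal{L}^N$ and spheres are Lebesgue-null, this continuity-at-the-boundary step is particularly transparent.
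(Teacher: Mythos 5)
Your proposal is correct and follows essentially the same route as the paper's proof: the same three-term decomposition into a mean-value-type estimate controlled by \eqref{eq:alpharestimate} plus two boundary-layer terms trapped in thin annuli via the $\varepsilon_r$-isometry properties, with the annular mass killed by weak convergence of $(\phi_r)_{\#}\nu_r$ and the vanishing $\nu_\infty$-measure of $\partial B(x_\infty,1)$. The only cosmetic difference is that you pull everything back to $X$ via $\phi_r^{-1}$ and work with the symmetric difference there, whereas the paper pushes forward and compares $\phi_r(B(x,r))$ with $B(x_\infty,1)$ inside $X_\infty$.
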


This result will later play a role as an estimate on the remainder, when we will approximate the rescaled nonlocal gradients $f_{x,r}$ by the linear part $f_{0,x}$. Compared to the situation when $X = \mathbb{R}^N$, the main difference is that there are two sources of error here - one which is of the same type as the Taylor remainder and one that comes from the fact that the domain changes in the approximation; it reflects the difference in the shapes of balls $B(x,r)$ and the ball $B(x_\infty,1)$.

\begin{proof}
Fix such $x \in X$ such that the Rademacher theorem holds (the set of such points is of full measure). Take the functions $f_{r,x}$, which by Arzela-Ascoli theorem converge locally uniformly (on a subsequence still denoted by $r$) to a function $f_{0,x}$. As discussed in Section \ref{sec:tangentsintro}, on $B(x,r)$ we have
\begin{equation}
\| f_{0,x}(\phi_r(\cdot)) - f_{r,x}(\cdot) \|_\infty \leq \alpha(r),
\end{equation}
where $\alpha(r) \rightarrow 0$ as $r \rightarrow 0$. Now, write the left integral in \eqref{eq:remainderestimate} as
\begin{equation}\label{eq:linearandremainder}
\int_{B(x,r)} |f_{r,x}(y)|^p \, d\nu_r(y) = \int_{B(x,r)} |f_{0,x}(\phi_r(y))|^p \, d\nu_r(y) +
\end{equation}
\begin{equation}
+ \int_{B(x,r)} \bigg( |f_{r,x}(y)|^p - |f_{0,x}(\phi_r(y))|^p \bigg) \, d\nu_r(y).
\end{equation}
We start by estimating the second summand on the right hand side.  By the Lagrange mean value theorem for $\phi(t) = t^p$ we have that for any $y \in B(x,r)$
\begin{equation}
\bigg| |f_{r,x}(y)|^p - |f_{0,x}(\phi_r(y))|^p \bigg| = p \, \tau^{p-1} \, |f_{r,x}(y) - f_{0,x}(\phi_r(y))|
\end{equation}
for some $\tau$ between $|f_{r,x}(y)|$ and $|f_{0,x}(\phi_r(y))|$. But by definition of $f_{r,x}$ we have that $|f_{r,x}|$ is bounded by $\mbox{Lip}(f)(x)$ on the ball $B(x,r)$; since $f_{0,x}$ is the uniform limit of $f_{r,x}$ as $r \rightarrow 0$ on $B(x,r)$, it satisfies the same bound. Hence, taking \eqref{eq:alpharestimate} into account, we have that
\begin{equation}
p \, \tau^{p-1} \, |f_{r,x}(y) - f_{0,x}(\phi_r(y))| \leq p \, |\mbox{Lip}(f)(x)|^p \, \alpha(r)
\end{equation}
for all $y \in B(x,r)$. Coming back to \eqref{eq:linearandremainder}, we have
\begin{equation}
\bigg| \int_{B(x,r)} \bigg( |f_{r,x}(y)|^p - |f_{0,x}(\phi_r(y))|^p \bigg) \, d\nu_r(y) \bigg| \leq \int_{B(x,r)} \bigg| |f_{r,x}(y)|^p - |f_{0,x}(\phi_r(y))|^p \bigg| \, d\nu_r(y) = 
\end{equation}
\begin{equation}
= \dashint_{B(x,r)} \bigg| |f_{r,x}(y)|^p - |f_{0,x}(\phi_r(y))|^p \bigg| \, d\nu(y) \leq p \, |\mbox{Lip}(f)(x)|^p \, \alpha(r),
\end{equation}
so
\begin{equation}\label{eq:meanvalueestimate}
\lim_{r \rightarrow 0} \int_{B(x,r)} \bigg( |f_{r,x}(y)|^p - |f_{0,x}(\phi_r(y))|^p \bigg) \, d\nu_r(y) = 0.
\end{equation}
To finish the proof, we need to show that the expression
\begin{equation}
\int_{B(x,r)} |f_{0,x}(\phi_r(y))|^p \, d\nu_r(y) - \int_{B(x_\infty,1)} |f_{0,x}(z)|^p \, d(\phi_r)_{\#}\nu_r(z)
\end{equation}
goes to zero as $r \rightarrow 0$. Notice that
\begin{equation}
\int_{B(x,r)} |f_{0,x}(\phi_r(y))|^p \, d\nu_r(y) = \int_{\phi_r(B(x,r))} |f_{0,x}|^p \, d(\phi_r)_{\#} \nu_r =  \int_{B(x_\infty,1)} |f_{0,x}|^p \, d(\phi_r)_{\#} \nu_r +
\end{equation}
\begin{equation} \label{eq:threesummands}
+ \int_{\phi_r(B(x,r)) \backslash B(x_\infty,1)} |f_{0,x}|^p \, d(\phi_r)_{\#} \nu_r - \int_{B(x_\infty,1) \backslash \phi_r(B(x,r))} |f_{0,x}|^p \, d(\phi_r)_{\#} \nu_r,
\end{equation}
so we have to prove that the second and third summand on the right hand side of \eqref{eq:threesummands} disappear in the limit $r \rightarrow 0$.

For the second summand, recall that $\phi_r$ are $\varepsilon_r$-isometries. For any $x,y \in X$ we have
\begin{equation}\label{eq:rewrittengromovhausdorff}
\bigg| d_\infty(\phi_r(x), \phi_r(y)) - r^{-1} d(x,y) \bigg| \leq \varepsilon_r,
\end{equation}
so for $y \in B(x,r)$ we have
\begin{equation}
d_\infty(\phi_r(y), x_\infty) \leq r^{-1} d(x,y) + \varepsilon_r + d_\infty(\phi_r(x), x_\infty) \leq 1 + 3 \varepsilon_r.
\end{equation}
In other words, $\phi_r(B(x,r)) \backslash B(x_\infty,1) \subset \overline{B(x_\infty,1 + 3\varepsilon_r)} \backslash B(x_\infty,1)$. Since $\varepsilon_r \rightarrow 0$ as $r \rightarrow 0$, fix $\rho_k$ small enough that $\varepsilon_{r} < \frac{1}{k}$ for all $r \in (0,\rho_k]$. On the ball $\overline{B(x_\infty,4)}$, which contains all the sets $\overline{B(x_\infty,1 + 3\varepsilon_r)} \backslash B(x_\infty,1)$ for $r \in (0, \rho_k)$, the function $|f_{0,x}|$ is uniformly bounded by some $M$, so
\begin{equation}
\limsup_{r \rightarrow 0} \, \bigg| \int_{\phi_r(B(x,r)) \backslash B(x_\infty,1)} |f_{0,x}|^p \, d(\phi_r)_{\#} \nu_r \bigg| \leq \limsup_{r \rightarrow 0} \, M^p \, \bigg|  \int_{\overline{B(x_\infty,1+3\varepsilon_r)} \backslash B(x_\infty,1)} d(\phi_r)_{\#} \nu_r \bigg| \leq
\end{equation}
\begin{equation}
\leq \limsup_{r \rightarrow 0} \, M^p \, \bigg|  \int_{B(x_\infty,1+\frac{3}{k}) \backslash B(x_\infty,1)} d(\phi_r)_{\#} \nu_r \bigg| = M^p \, \nu_\infty(B(x_\infty,1+\frac{3}{k}) \backslash B(x_\infty,1)).
\end{equation}
Recall that doubling measures (and $\nu_\infty$ is doubling as a limit of a uniformly doubling sequence) give zero measure to boundaries of balls, so we have $\nu_\infty(\partial B(x_\infty,1)) = 0$. Since $k$ was arbitrary, the right hand side can be made arbitrarily small and we see that
\begin{equation}\label{eq:secondsummand}
\lim_{r \rightarrow 0} \, \bigg(\int_{\phi_r(B(x,r)) \backslash B(x_\infty,1)} |f_{0,x}|^p \, d(\phi_r)_{\#} \nu_r\bigg) = 0.
\end{equation}
We estimate the third summand in the right hand side of \eqref{eq:threesummands} as follows. For any $x \in X$ and $y \notin B(x,r)$, by \eqref{eq:rewrittengromovhausdorff} we have
\begin{equation}
d_\infty(\phi_r(y),x_\infty) \geq r^{-1} d(x,y) - \varepsilon_r - d_\infty(\phi_r(x),x_\infty) \| \geq 1 - 3 \varepsilon_r.
\end{equation}
Again, fix $\rho_k$ small enough that $\varepsilon_{r} < \frac{1}{k}$ for all $r \in (0,\rho_k]$; then the inequality above means that $\phi_r(X) \backslash \phi_r(B(x,r)) \subset X_\infty \backslash B(x_\infty,1-3\varepsilon_r) \subset X_\infty \backslash B(x_\infty,1-\frac{3}{k})$.  By definition of a pushforward measure, $(\phi_r)_{\#} \nu_r$ is supported on the image of $\phi_r$, so
\begin{equation}
\limsup_{r \rightarrow 0} \, \bigg| \int_{B(x_\infty,1) \backslash \phi_r(B(x,r))} |f_{0,x}|^p \, d(\phi_r)_{\#} \nu_r \bigg| \leq \limsup_{r \rightarrow 0} \, M^p \, \bigg|  \int_{B(x_\infty,1) \backslash \phi_r(B(x,r))} d(\phi_r)_{\#} \nu_r \bigg| =
\end{equation}
\begin{equation}
= \limsup_{r \rightarrow 0} M^p \bigg|\int_{B(x_\infty,1) \cap (\phi_r(X) \backslash \phi_r(B(x,r)))} d(\phi_r)_{\#} \nu_r \bigg| \leq
\end{equation}
\begin{equation}
\leq \limsup_{r \rightarrow 0} \, M^p \, \bigg|\int_{B(x_\infty,1) \backslash B(x_\infty,1-\frac{3}{k})} d(\phi_r)_{\#} \nu_r \bigg| = M^p \, \nu_\infty(B(x_\infty,1) \backslash B(x_\infty,1-\frac{3}{k})).
\end{equation}
Since $k$ was arbitrary, the right hand side can be made arbitrarily small and we see that
\begin{equation}\label{eq:thirdsummand}
\lim_{r \rightarrow 0} \, \bigg(\int_{B(x_\infty,1) \backslash \phi_r(B(x,r))} |f_{0,x}|^p \, d(\phi_r)_{\#} \nu_r\bigg) = 0.
\end{equation}
When we plug in equations \eqref{eq:secondsummand} and \eqref{eq:thirdsummand} to \eqref{eq:threesummands}, we obtain that
\begin{equation}
\lim_{r \rightarrow 0} \bigg( \int_{B(x,r)} |f_{0,x}(\phi_r(y))|^p \, d\nu_r(y) - \int_{B(x_\infty,1)} |f_{0,x}(z)|^p \, d(\phi_r)_{\#}\nu_r(z) \bigg) = 0,
\end{equation}
which together with \eqref{eq:meanvalueestimate} give the statement of the Lemma.
\end{proof}

\begin{proposition}\label{prop:pointwiserademacher}
Suppose that $(X,d,\nu)$ is a doubling metric measure space, which satisfies the $(1,p)$-Poincar\'e inequality for some $p \in (1,\infty)$. Suppose additionally that $X$ has $\nu$-a.e. Euclidean tangents of dimension $N$. Let $f \in \mbox{Lip}(X)$. Then for $\nu$-a.e. $x \in X$ we have
\begin{equation}\label{eq:pointwiseeuclideantangents}
\lim_{r \rightarrow 0} \, \frac{1}{r^p} \dashint_{B(x,r)} |f(x) - f(y)|^p \, d\nu(y) = C_{p,N} \, |\mbox{Lip}(f)(x)|^p, 
\end{equation}
where
\begin{equation}\label{eq:constantkpnprime}
C_{p,N} = \dashint_{B(0,1)} |z \cdot v|^p \, d\mathcal{L}^N(z),
\end{equation}
where $v$ is any unit vector in $\mathbb{R}^N$. This constant is not the same as the constant $K_{p,N}$ in the statement of Theorem \ref{thm:bbm}, but they are closely related, see Section \ref{sec:averagesonballs}.
\end{proposition}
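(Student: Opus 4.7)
The plan is to combine Lemma \ref{lem:remainder} with the measured Gromov-Hausdorff convergence to a Euclidean tangent and then use the classification of generalised linear functions on $\mathbb{R}^N$ to evaluate the limit explicitly. First, I would fix a point $x \in X$ at which the tangent cone is isometric to $(\mathbb{R}^N, 0, \|\cdot\|, c_N \mathcal{L}^N)$ and the conclusions of Theorem \ref{thm:rademacher} and Lemma \ref{lem:remainder} both hold; the intersection of these conditions has full $\nu$-measure. Writing $\nu_r = \nu/\nu(B(x,r))$, a direct rewriting gives
\begin{equation*}
\frac{1}{r^p}\dashint_{B(x,r)} |f(x) - f(y)|^p \, d\nu(y) = \int_{B(x,r)} |f_{r,x}(y)|^p \, d\nu_r(y),
\end{equation*}
so the task is to compute the limit of the right hand side as $r \to 0$.

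To handle the subsequential nature of the blow-up, I would pick any sequence $r_n \to 0$ and extract, by Arzel\`a-Ascoli as discussed in Section \ref{sec:tangentsintro}, a further subsequence along which $f_{r_n,x}$ converges locally uniformly (after identification via $\phi_{r_n}$) to some Lipschitz function $f_{0,x}$ on the tangent $\mathbb{R}^N$. Lemma \ref{lem:remainder} then gives
\begin{equation*}
\int_{B(x,r_n)} |f_{r_n,x}(y)|^p \, d\nu_{r_n}(y) = \int_{B(0,1)} |f_{0,x}(z)|^p \, d(\phi_{r_n})_\# \nu_{r_n}(z) + o(1).
\end{equation*}
Since the Euclidean tangent hypothesis yields $(\phi_{r_n})_\# \nu_{r_n} \rightharpoonup c_N \mathcal{L}^N$ weakly, $f_{0,x}$ is continuous and bounded on a neighbourhood of $\overline{B(0,1)}$, and $c_N\mathcal{L}^N(\partial B(0,1)) = 0$, the portmanteau theorem yields
\begin{equation*}
\lim_{n \to \infty} \int_{B(0,1)} |f_{0,x}(z)|^p \, d(\phi_{r_n})_\# \nu_{r_n}(z) = \dashint_{B(0,1)} |f_{0,x}(z)|^p \, d\mathcal{L}^N(z).
\end{equation*}

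Next I would invoke Theorem \ref{thm:rademacher}: $f_{0,x}$ is a generalised linear function on $\mathbb{R}^N$ with $\mbox{Lip}\, f_{0,x} = \mbox{Lip}(f)(x)$, and by \cite[Theorem 8.11]{Che} it must be affine. Because $f_{r_n,x}(x) = 0$ for every $n$ and $\phi_{r_n}(x) \to 0$, the estimate \eqref{eq:alpharestimate} forces $f_{0,x}(0) = 0$, so $f_{0,x}(z) = v_x \cdot z$ for some $v_x \in \mathbb{R}^N$ with $|v_x| = \mbox{Lip}(f)(x)$. Rotational invariance of $\mathcal{L}^N$ on the unit ball then gives
\begin{equation*}
\dashint_{B(0,1)} |v_x \cdot z|^p \, d\mathcal{L}^N(z) = |\mbox{Lip}(f)(x)|^p \, C_{p,N},
\end{equation*}
and this value is independent of the chosen subsequence. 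A standard subsequence argument then upgrades the subsequential limit to the full limit asserted in \eqref{eq:pointwiseeuclideantangents}.

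The main obstacle I anticipate is the careful coordination of two approximations: replacing $f_{r,x}$ by its blow-up $f_{0,x}$ on $B(x,r)$ while simultaneously passing from integrals against $(\phi_r)_\# \nu_r$ to integrals against $c_N \mathcal{L}^N$. The first step is the content of Lemma \ref{lem:remainder} and reduces to controlling the mismatch between the shapes of $B(x,r)$ and $B(0,1)$; the second relies on the doubling tangent measure assigning zero mass to the boundary of the unit ball. The Euclidean tangent hypothesis is essential in two places: once to identify $\nu_\infty$ explicitly as $c_N \mathcal{L}^N$ so that the limit integral can be computed, and once to promote the generalised linear function $f_{0,x}$ to a genuine linear function, without which one could not evaluate the integral as the explicit constant $C_{p,N} |\mbox{Lip}(f)(x)|^p$.
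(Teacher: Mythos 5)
Your proposal is correct and follows essentially the same route as the paper's proof: rewrite the averaged difference quotient as $\int_{B(x,r)}|f_{r,x}|^p\,d\nu_r$, apply Lemma \ref{lem:remainder}, identify $f_{0,x}$ as an affine function $z \mapsto \mbox{Lip}(f)(x)\, z\cdot v$ via Theorem \ref{thm:rademacher} and Cheeger's classification of generalised linear functions on $\mathbb{R}^N$, and pass to the limit using the weak convergence $(\phi_r)_\#\nu_r \rightharpoonup c_N\mathcal{L}^N$. Your explicit handling of the subsequence extraction (noting the limit value is independent of the chosen subsequence) and your invocation of the portmanteau theorem together with $\nu_\infty(\partial B(0,1))=0$ are welcome points of extra care, but they do not change the argument's structure.
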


\begin{proof}
The set of points in the statement of Theorem \ref{thm:rademacher} (Rademacher theorem) is of full measure; choose such a point $x \in X$. In the notation introduced in Section \ref{sec:tangentsintro}, notice that $|f(x) - f(y)| = r|f_{r,x}(y)|$, so we may use Lemma \ref{lem:remainder} in the last equality and obtain
\begin{equation}
\lim_{r \rightarrow 0} \frac{1}{r^p} \dashint_{B(x,r)} |f(x) - f(y)|^p \, d\nu(y) = \lim_{r \rightarrow 0} \frac{1}{r^p} \int_{B(x,r)} |f(x) - f(y)|^p \, d\nu_r(y) = 
\end{equation}
\begin{equation}
= \lim_{r \rightarrow 0} \int_{B(x,r)} |f_{r,x}(y)|^p \, d\nu_r(y) = \lim_{r \rightarrow 0} \int_{B(x_\infty,1)} |f_{0,x}(z)|^p \, d(\phi_r)_{\#}\nu_r(z).
\end{equation}
Now, we need to estimate this last expression using the fact that for $\nu$-a.e. $x \in X$ the tangent space is the Euclidean space $(\mathbb{R}^N, 0, \| \cdot \|, c_N \mathcal{L}^N)$.

Recall that $f_{0,x}$ is a generalised linear function with Lipschitz constant $\mbox{Lip}(f)(x)$. Since the tangent space $X_\infty$ is Euclidean, by \cite[Theorem 8.11]{Che} $f_{0,x}$ is affine; since $f_{0,x}$ is the locally uniform limit of $f_{r,x}$, it has value $0$ at zero. This means that $f_{0,x}$ is of the form
\begin{equation}
f_{0,x}(z) = \mbox{Lip}(f)(x) \, z \cdot v,
\end{equation}
where $v$ is a vector of length one. Since (by definition of measured Gromov-Hausdorff convergence) the measures $(\phi_r)_{\#} \nu_r$ converge weakly to $c_N \mathcal{L}^N$,  we have
\begin{equation}
\lim_{r \rightarrow 0} \bigg( \int_{B(x_\infty,1)} |f_{0,x}(z)|^p \, d(\phi_r)_{\#}\nu_r(z) \bigg)= \lim_{r \rightarrow 0} \, \bigg(|\mbox{Lip}(f)(x)|^p \int_{B(0,1)} |z \cdot v|^p \, d(\phi_r)_{\#} \nu_r \bigg) = 
\end{equation}
\begin{equation}\label{eq:firstsummand}
= \bigg(\int_{B(0,1)} |z \cdot v|^p \, c_N \, d\mathcal{L}^N(z) \bigg) \, |\mbox{Lip}(f)(x)|^p = C_{p,N} \, |\mbox{Lip}(f)(x)|^p,
\end{equation}
where $C_{p,N}$ is the constant introduced in \eqref{eq:constantkpnprime}; note that it only depends on $p$ and the dimension of the tangent space.
\end{proof}

This approach, using a blow-up technique and the Rademacher theorem instead of the Taylor formula used in the original proof in \cite{BBM}, gives a new proof even in the context of Euclidean spaces. Moreover, a significant part of the proof did not depend on the structure of the tangent space; it plays a role only via the characterisation of generalised linear functions. Therefore, this approach allows for some extensions in terms of the structure of the tangent space, such as the case when the tangent space at $\nu$-a.e. point is a fixed Carnot group $\mathbb{G}$ of step 2, see Section \ref{sec:carnotgroupstep2}. Finally, notice that the constant $C_{p,N}$ does not depend on the metric space itself - it depends only on the dimension of the tangent space $N$.

Now, we use the pointwise result proved above to prove the desired result for Sobolev spaces for $p > 1$. The first step is to prove a uniform estimate on the integral of the nonlocal gradient for Sobolev functions. From now on, denote $\Delta_r = \{ (x,y) \in X \times X: \, d(x,y) < r \}$. 

\begin{lemma}\label{lem:uniformboundeuclideantangent}
Let $p \in (1,\infty)$. Suppose that $(X,d,\nu)$ is a doubling metric measure space which supports a $(1,p)$-Poincar\'e inequality. For any $f \in W^{1,p}(X,d,\nu)$ we have
\begin{equation}\label{eq:protezadlamalychr}
\frac{1}{r^p} \int_{X} \dashint_{B(x,r)} |f(y) - f(x)|^p \, d\nu(y) \, d\nu(x) \leq C(p,X) \cdot \mbox{Ch}_p(f).
\end{equation}
\end{lemma}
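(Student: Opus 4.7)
The plan is to reduce the estimate to the Hajlasz-Sobolev pointwise inequality provided by Lemma \ref{lem:equivalenceofspaces} and then apply Fubini's theorem together with the doubling condition on $\nu$. The bound $C(p,X)$ will come out depending only on $p$ and the doubling constant $c_D$.

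First, I would invoke Lemma \ref{lem:equivalenceofspaces} to get a function $g \in L^p(X,\nu)$ with $\|g\|_{L^p(X,\nu)}^p \leq C \cdot \mbox{Ch}_p(f)$ satisfying
\begin{equation}
|f(x) - f(y)| \leq d(x,y)\,(g(x) + g(y)) \qquad \text{for } \nu\text{-a.e.\ } x,y \in X.
\end{equation}
Raising to the $p$-th power and using the elementary inequality $(a+b)^p \leq 2^{p-1}(a^p+b^p)$, together with the observation that $d(x,y) < r$ whenever $y \in B(x,r)$, one obtains the pointwise bound
\begin{equation}
\frac{|f(x) - f(y)|^p}{r^p} \leq 2^{p-1}\bigl(g(x)^p + g(y)^p\bigr)
\end{equation}
valid for $\nu \otimes \nu$-a.e.\ $(x,y)$ with $y \in B(x,r)$. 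Integrating this over $y \in B(x,r)$ with respect to the normalised measure $\dashint_{B(x,r)} d\nu(y)$ and then over $x \in X$ splits the right hand side into two summands.

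The first summand integrates trivially to $2^{p-1}\|g\|_{L^p(X,\nu)}^p$. For the second summand, the key step is to swap the order of integration. I would write
\begin{equation}
\int_X \dashint_{B(x,r)} g(y)^p \, d\nu(y)\, d\nu(x) = \int_X g(y)^p \int_{B(y,r)} \frac{1}{\nu(B(x,r))}\, d\nu(x)\, d\nu(y),
\end{equation}
using the symmetric fact that $y \in B(x,r) \Leftrightarrow x \in B(y,r)$. Since $x \in B(y,r)$ forces $B(y,r) \subset B(x,2r)$, the doubling condition gives $\nu(B(y,r)) \leq c_D \, \nu(B(x,r))$, whence the inner integral is bounded by $c_D$ uniformly in $y$ and $r$. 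The second summand is therefore at most $c_D \, \|g\|_{L^p(X,\nu)}^p$.

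Combining the two estimates,
\begin{equation}
\frac{1}{r^p} \int_X \dashint_{B(x,r)} |f(y)-f(x)|^p\, d\nu(y)\, d\nu(x) \leq 2^{p-1}(1+c_D)\, \|g\|_{L^p(X,\nu)}^p \leq C(p,X)\, \mbox{Ch}_p(f),
\end{equation}
which is the desired bound. No single step is a genuine obstacle here; the only mildly delicate point is the Fubini/doubling manipulation of the second summand, but it is a standard consequence of the doubling property.
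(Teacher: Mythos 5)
Your proposal is correct and follows essentially the same route as the paper: the Hajlasz pointwise inequality from Lemma \ref{lem:equivalenceofspaces}, the convexity bound $(a+b)^p \leq 2^{p-1}(a^p+b^p)$, and a Fubini swap controlled by the doubling ratio $\nu(B(y,r))/\nu(B(x,r))$. If anything, your handling of the normalising factor $\nu(B(x,r))^{-1}$ (keeping it inside the integral until the Fubini step) is written more carefully than the paper's version.
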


\begin{proof}
Take $g \in L^p(X,\nu)$ given by Lemma \ref{lem:equivalenceofspaces} and calculate
\begin{equation}
\frac{1}{r^p} \int_{X} \dashint_{B(x,r)} |f(y) - f(x)|^p \, d\nu(y) \, d\nu(x) = \frac{1}{\nu(B(x,r))} \int_{\Delta_r} \bigg| \frac{f(y) - f(x)}{r} \bigg|^p \, d\nu(y) \, d\nu(x) \leq
\end{equation}
\begin{equation}
\leq \frac{1}{\nu(B(x,r))} \int_{\Delta_r} \bigg| \frac{f(y) - f(x)}{d(x,y)} \bigg|^p d\nu(y) d\nu(x) 
\leq \frac{C^p}{\nu(B(x,r))}  \int_{\Delta_r} (g(x) + g(y))^p d\nu(y) d\nu(x) \leq 
\end{equation}
\begin{equation}
\leq \frac{C^p \, 2^{p-1}}{\nu(B(x,r))}  \int_{\Delta_r} \bigg((g(x))^p + (g(y))^p \bigg) \, d\nu(y) \, d\nu(x) = 
\end{equation}
\begin{equation}
= \frac{C^p \, 2^{p-1}}{\nu(B(x,r))} \bigg( \int_{X} \int_{B(x,r)} (g(x))^p \, d\nu(y) \, d\nu(x) + \int_{B(y,r)} \int_{X} (g(y))^p \, d\nu(y) \, d\nu(x) \bigg) = 
\end{equation}
\begin{equation}
= C^p \, 2^{p-1} \int_X (g(x))^p \, d\nu(x) + C^p \, 2^{p-1} \, \frac{\nu(B(y,r))}{\nu(B(x,r))} \int_X (g(y))^p \, d\nu(y) \leq
\end{equation}
\begin{equation}
\leq C' \int_X (g(x))^p \, d\nu(x) \leq C(p,X) \cdot \mbox{Ch}_p(f).
\end{equation}
Here, the constant in the last line comes from Lemma \ref{lem:equivalenceofspaces} and the doubling property.
\end{proof}

Now, we integrate the pointwise result (Proposition \ref{prop:pointwiserademacher}) and use the density of Lipschitz functions to prove an analogue of Theorem \ref{thm:bbm} for Sobolev spaces $W^{1,p}(X,d,\nu)$ for $p > 1$ in the general setting. 

\begin{theorem}\label{thm:bbmeuclideantangent}
Suppose that $(X,d,\nu)$ is a complete, separable, doubling metric measure space which supports a $(1,p)$-Poincar\'e inequality. Suppose additionally that $X$ has Euclidean tangents of dimension $N$ for $\nu$-a.e. $x \in X$. Let $f \in W^{1,p}(X,d,\nu)$, where $p \in (1,\infty)$.
Then
\begin{equation}\label{eq:integratedeuclideantangent}
\lim_{r \rightarrow 0} \, \frac{1}{r^p} \int_{X} \dashint_{B(x,r)} |f(x) - f(y)|^p \, d\mathcal{L}^N(y) \, d\mathcal{L}^N(x) = C_{p,N} \cdot \mbox{Ch}_p(f).
\end{equation}
\end{theorem}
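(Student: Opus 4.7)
The strategy is to combine the pointwise result of Proposition \ref{prop:pointwiserademacher}, the uniform bound from Lemma \ref{lem:uniformboundeuclideantangent}, and the density of bounded Lipschitz functions with bounded support in $W^{1,p}(X,d,\nu)$. I will denote the BBM difference quotient by
\begin{equation}
I_r(f)^p = \frac{1}{r^p}\int_X \dashint_{B(x,r)} |f(x)-f(y)|^p \, d\nu(y)\, d\nu(x),
\end{equation}
and observe that $I_r$ is a seminorm: writing it as $\int_{X\times X}|f(x)-f(y)|^p\, w_r(x,y)\, d\nu(x)d\nu(y)$ with the nonnegative weight $w_r(x,y)=r^{-p}\,\mathbf{1}_{B(x,r)}(y)/\nu(B(x,r))$, Minkowski's inequality gives $|I_r(f)-I_r(g)|\le I_r(f-g)$.

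\textbf{Step 1: Lipschitz functions with bounded support.} Fix $f\in\mbox{Lip}(X)$ with Lipschitz constant $L$ and $\mbox{supp}\,f\subset B(x_0,R)$. For $\nu$-a.e. $x\in X$, Proposition \ref{prop:pointwiserademacher} yields
\begin{equation}
\lim_{r\to 0}\frac{1}{r^p}\dashint_{B(x,r)}|f(x)-f(y)|^p\, d\nu(y) = C_{p,N}\,|\mbox{Lip}(f)(x)|^p.
\end{equation}
Since $|f(x)-f(y)|\le L\,d(x,y)\le Lr$ for $y\in B(x,r)$, the integrand in $x$ is bounded by $L^p$; moreover it vanishes for $x\notin B(x_0,R+1)$ once $r<1$. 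Hence dominated convergence applies and gives
\begin{equation}
\lim_{r\to 0} I_r(f)^p = C_{p,N}\int_X |\mbox{Lip}(f)(x)|^p\, d\nu(x) = C_{p,N}\cdot\mbox{Ch}_p(f),
\end{equation}
where the last equality uses the identification of $\mbox{Lip}(f)$ with the minimal $p$-relaxed slope $\nu$-a.e. on a doubling $(1,p)$-Poincar\'e space (via \cite{AGS}).

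\textbf{Step 2: Approximation.} Given $f\in W^{1,p}(X,d,\nu)$, use the density result of \cite{ACdM} to pick bounded Lipschitz functions $f_n$ with bounded support such that $f_n\to f$ in $W^{1,p}(X,d,\nu)$, i.e.\ $\|f_n-f\|_{L^p}\to 0$ and $\mbox{Ch}_p(f_n-f)\to 0$. By the triangle inequality for $I_r$ and Lemma \ref{lem:uniformboundeuclideantangent},
\begin{equation}
|I_r(f)-I_r(f_n)|\le I_r(f-f_n)\le C(p,X)^{1/p}\,\mbox{Ch}_p(f-f_n)^{1/p},
\end{equation}
with the right-hand side uniform in $r>0$ and arbitrarily small for $n$ large. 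Combined with $\mbox{Ch}_p(f_n)^{1/p}\to\mbox{Ch}_p(f)^{1/p}$ (by the reverse triangle inequality for the norm associated with the minimal $p$-relaxed slope) and Step 1 applied to each $f_n$, a standard $3\varepsilon$ argument yields $I_r(f)^p\to C_{p,N}\cdot\mbox{Ch}_p(f)$.

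\textbf{Main obstacle.} The delicate point is that $\mbox{Ch}_p$ is a $p$-homogeneous functional rather than a $p$-th power of a norm in any obvious intrinsic way, so one must pass to the quantities $I_r(\cdot)$ and $\mbox{Ch}_p(\cdot)^{1/p}$ which are genuine seminorms to justify the triangle-inequality step. The uniform bound of Lemma \ref{lem:uniformboundeuclideantangent} is precisely what allows the approximation error to be controlled uniformly in $r$; without it one could not interchange the limit $r\to 0$ with the limit $n\to\infty$. The rest is essentially the diagonal $3\varepsilon$ argument, combined with the fact that for Lipschitz functions we have the clean dominating bound $L^p\cdot\mathbf{1}_{B(x_0,R+1)}$ on the integrand, which makes dominated convergence applicable in Step 1.
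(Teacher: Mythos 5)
Your proposal is correct and follows essentially the same route as the paper: establish the pointwise limit for Lipschitz functions via Proposition \ref{prop:pointwiserademacher} and dominated convergence, then pass to general $f\in W^{1,p}(X,d,\nu)$ using the density of bounded Lipschitz functions with bounded support together with the uniform-in-$r$ bound of Lemma \ref{lem:uniformboundeuclideantangent} and the seminorm (triangle-inequality) structure of the difference quotient. Your version is in fact slightly more careful on two points the paper glosses over --- you supply the integrable dominating function $L^p\,\mathbf{1}_{B(x_0,R+1)}$ (needed when $\nu(X)=\infty$) and you make explicit the identification of $\mbox{Lip}(f)$ with the minimal $p$-relaxed slope --- but these are refinements of the same argument, not a different one.
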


\begin{proof}
Set
\begin{equation}
\overline{f}_r(x,y) = \frac{|f(x) - f(y)|}{r} \, \chi_{B(x,r)}(y) \, |B(x,r)|^{-1/p} \in L^p(X \times X, \nu \otimes \nu).
\end{equation}
Using this function, we can rephrase equation \eqref{eq:integratedeuclideantangent} as 
\begin{equation}
\lim_{r \rightarrow 0} \| \overline{f}_r \|_{L^p(X \times X, \nu \otimes \nu)}^p = C_{p,N} \cdot \mbox{Ch}_p(f)
\end{equation}
and equation \eqref{eq:protezadlamalychr} as $\| \overline{f}_r\|_{L^p(X \times X, \nu \otimes \nu)}^p \leq C \cdot \mbox{Ch}_p(f)$. Now, take any $f, g \in W^{1,p}(X,d,\nu)$. We estimate
\begin{equation}
\bigg| \|  \overline{f}_r \|_{L^p(X \times X, \nu \otimes \nu)} - \|  \overline{g}_r \|_{L^p(X \times X, \nu \otimes \nu)} \bigg| \leq 2^{p-1} \| \overline{(f-g)}_r \|_{L^p(X \times X, \nu \otimes \nu)} \leq C \cdot (\mbox{Ch}_p(f - g))^{1/p}.
\end{equation}
By the above equation, taking into account the density of bounded Lipschitz functions with bounded support in $W^{1,p}(X,d,\nu)$, it suffices to establish equation \eqref{eq:integratedeuclideantangent} for $\mbox{Lip}(X)$. Take any $f \in \mbox{Lip}(X)$ with Lipschitz constant $L$ and use Proposition \ref{prop:pointwiserademacher}; for $\nu-$a.e. $x \in X$ we obtain equality \eqref{eq:pointwiseeuclideantangents}. Then
\begin{equation}
\frac{1}{r^p} \dashint_{B(x,r)} |f(x) - f(y)|^p \, d\mathcal{L}^N(x) \leq \dashint_{B(x,r)} L^p \, d\mathcal{L}^N(x) = L^p.
\end{equation}
Hence, we may integrate equality \eqref{eq:pointwiseeuclideantangents} over $X$ and use the dominated convergence theorem to change the order of integration and taking the limit; we get that \eqref{eq:integratedeuclideantangent} is satisfied for $f$. We extend this result to $W^{1,p}(X,d,\nu)$ by density of Lipschitz functions.
\end{proof}

\section{Comments and extensions}\label{sec:4commentsandextensions}

\subsection{Comparison with taking averages on balls}\label{sec:averagesonballs}

The constant $K_{p,N}$ is Theorem \ref{thm:bbm} and the constant $C_{p,N}$ in Theorem \ref{thm:bbmeuclideantangent} are not equal, but they are closely related; in the case when $X = \mathbb{R}^N$, the two results are related as follows: if we make the right choice of the approximating kernel $\rho_r$ in Theorem \ref{thm:bbm}, namely
\begin{equation}
\rho_r(x) = \bigg(r^N \, \int_{B(0,1)} |z|^p \,  d\mathcal{L}^N(z) \bigg)^{-1} \frac{|x|^p}{r^p}  \, \chi_{B(0,r)}(x),
\end{equation}
we get Theorem \ref{thm:bbmeuclideantangent}. Such $\rho_r$ satisfies the assumptions of Theorem \ref{thm:bbm}, since it is nonnegative, radial, has support in the ball $B(0,r)$ and the normalisation constant is chosen so that $\int_{\mathbb{\mathbb{R}^N}} \rho_r \, d\mathcal{L}^N = 1$. If we use such $\rho_r$ in Theorem \ref{thm:bbm}, we obtain
\begin{equation}
K_{p,N} \, \| \nabla f \|^p_{L^{p}(\mathbb{R}^N)} = \lim_{r \rightarrow 0} \int_{\mathbb{R}^N} \int_{\mathbb{R}^N} \frac{|f(x) - f(y)|^p}{|x-y|^p} \, \rho_r(|x-y|) \, d\mathcal{L}^N(x) \, d\mathcal{L}^N(y) =
\end{equation}
\begin{equation}
= \lim_{r \rightarrow 0} \, \bigg(r^N \, \int_{B(0,1)} |z|^p \,  d\mathcal{L}^N(z) \bigg)^{-1} \, \frac{1}{r^p} \int_{\mathbb{R}^N} \int_{B(x,r)}  |f(x) - f(y)|^p \, d\mathcal{L}^N(x) \, d\mathcal{L}^N(y) =
\end{equation}
\begin{equation}
= \lim_{r \rightarrow 0} \, \bigg(\dashint_{B(0,1)} |z|^p \, d\mathcal{L}^N(z) \bigg)^{-1} \, \frac{1}{r^p} \int_{\mathbb{R}^N} \dashint_{B(x,r)}  |f(x) - f(y)|^p \, d\mathcal{L}^N(x) \, d\mathcal{L}^N(y),
\end{equation}
so
\begin{equation}
\lim_{r \rightarrow 0} \, \frac{1}{r^p} \int_{\mathbb{R}^N} \dashint_{B(x,r)}  |f(x) - f(y)|^p \, d\mathcal{L}^N(x) \, d\mathcal{L}^N(y) = \bigg(\dashint_{B(0,1)} |z|^p \, d\mathcal{L}^N(z) \bigg) K_{p,N}  \| \nabla f \|^p_{L^{p}(\mathbb{R}^N)}.
\end{equation}
Hence, we have 
\begin{equation}
C_{p,N} = \bigg(\dashint_{B(0,1)} |z|^p \, d\mathcal{L}^N(z) \bigg) \, K_{p,N}.
\end{equation}
Finally, let us see that it agrees with the value given in Proposition \ref{prop:pointwiserademacher}. We use the spherical version of the Fubini theorem with $u(x) = \chi_{B(0,1)}(x) \, |x|^p$:
\begin{equation}
C_{p,N} = \frac{K_{p,N}}{\mathcal{L}^N(B(0,1))} \int_{B(0,1)} |z|^p \, d\mathcal{L}^N(z) = \frac{K_{p,N}}{\mathcal{L}^N(B(0,1))} \int_0^1 r^{N+p-1} \int_{\partial B(0,1)} 1 \, d\sigma \, dr =
\end{equation}
\begin{equation}
= \frac{\mathcal{H}^{N-1}(S^{N-1}) \, K_{p,N}}{\mathcal{L}^N(B(0,1))} \int_0^1 r^{N+p-1}  \, dr = \frac{\mathcal{H}^{N-1}(S^{N-1})}{\mathcal{L}^N(B(0,1))} \int_0^1 r^{N+p-1} \, \dashint_{\partial B(0,1)} |x \cdot v|^p \, d\sigma \, dr =
\end{equation}
\begin{equation}
= \frac{1}{\mathcal{L}^N(B(0,1))} \int_0^1 r^{N-1} \, \int_{\partial B(0,1)} |rx \cdot v|^p \, d\sigma \, dr =
\end{equation}
\begin{equation}
= \frac{1}{\mathcal{L}^N(B(0,1))} \int_0^\infty r^{N-1} \, \int_{\partial B(0,1)} \chi_{B(0,1)}(rx) \, |rx \cdot v|^p \, d\sigma \, dr =
\end{equation}
\begin{equation}
= \frac{1}{\mathcal{L}^N(B(0,1))} \int_{\mathbb{R}^N} \chi_{B(0,1)}(x) \, |x \cdot v|^p \, d\mathcal{L}^N(x) = \dashint_{B(0,1)} |x \cdot v|^p \, d\mathcal{L}^N(x),
\end{equation}
hence, the constant $C_{p,N}$ is consistent with the constant $K_{p,N}$ for a special choice of the approximating sequence.

\subsection{Spaces with Heisenberg group as a tangent}\label{sec:carnotgroupstep2}

A closer look at the structure of the proof of Theorem \ref{thm:bbmeuclideantangent} reveals that the assumption that $X$ has Euclidean tangents $\nu$-a.e. comes into play only via the structure of generalized linear functions on the tangent space $X_\infty$. Therefore, in principle it should be possible to generalize Theorem \ref{thm:bbmeuclideantangent} to the case when the tangent space at $\nu$-a.e. point is fixed, but not Euclidean. In this Section, we take a closer look at the classical results of Cheeger (\cite{Che}) to present such an argument for a simple case: the Heisenberg group $\mathbb{H}^1$.

Recall that the Heisenberg group $\mathbb{H}^1$ is the space $\mathbb{R}^3$ equipped with a Lie group structure with multiplication
\begin{equation}
(x_1, x_2, x_3) \cdot (y_1, y_2, y_3) = (x_1 + y_1, x_2 + y_2, x_3 + y_3 + 2(x_1 y_2 - x_2 y_1))
\end{equation}
and equipped with the Carnot-Carath\'eodory distance (arising from a family of left invariant vector fields). By the left invariance of the distance, it is enough to compute the distance from $0$ to any given point (denoted by $d_0$); then, the distance $d_{\mathbb{H}^1}$ is related to $d_0$ by left invariance, namely $d_{\mathbb{H}^1}(x,y) = d_0(y^{-1}x)$. As proved in \cite[Corollary 3.2]{HZ}, $d_0$ is given by the formula
\begin{equation}\label{eq:ccdistance}
d_{0}\bigg((x_1, x_2, x_3)\bigg) = \frac{x_3}{\sqrt{x_1^2 + x_2^2}} \sin\bigg(\pi H^{-1}(\frac{x_3}{x_1^2 + x_2^2})\bigg) + \sqrt{x_1^2 + x_2^2} \, \cos\bigg(\pi H^{-1}(\frac{x_3}{x_1^2 + x_2^2})\bigg),
\end{equation}
where $H: (-1,1) \rightarrow \mathbb{R}$ is defined by the formula
\begin{equation}
H(s) = \frac{2\pi}{1 - \cos(2\pi s)} \bigg( s - \frac{\sin(2 \pi s)}{2\pi} \bigg).
\end{equation}
The function $H$ is a real analytic diffeomorphism of $(-1,1)$ onto $\mathbb{R}$ with $H(0) = 0$. 

We begin the argument by recalling \cite[Theorem 8.10]{Che}.

\begin{theorem}\label{thm:cheeger810}
Assume that $Z$ is complete, noncompact, equipped with a doubling measure $\mu$ which satisfies the $(1,p)$-Poincar\'e inequality. Let $l \in \mbox{Lip}(Z)$ be a generalised linear function on $Z$. Then, for any $z_0 \in Z$ there exists a geodesic $\gamma: (-\infty,\infty) \rightarrow Z$ with $\gamma(0) = z_0$ such that $\gamma$ is an integral curve for the upper gradient $g_l = \mbox{Lip}(l)$.
\end{theorem}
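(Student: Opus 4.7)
The plan is to realise $\gamma$ as a subsequential limit of arc-length parametrised near-minimising curves joining points $y_n^{\pm}$ with $l(y_n^{\pm}) = l(z_0) \pm n$, and then to exploit rigidity in the upper-gradient inequality to promote $\leq$ to $=$. The case $l \equiv 0$ is trivial: any bi-infinite unit-speed geodesic through $z_0$ works, and such a curve exists because $Z$ is complete, noncompact, and (by doubling plus the $(1,p)$-Poincaré inequality) quasiconvex, so that its length distance makes $Z$ a proper geodesic space with unbounded balls from every basepoint. I therefore assume $l$ is surjective and $g_l \equiv c > 0$.

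For each $n$, choose $y_n^{\pm} \in Z$ with $l(y_n^{\pm}) = l(z_0) \pm n$. Applying the upper-gradient inequality along any rectifiable curve $\eta$ from $y_n^-$ to $y_n^+$ gives
\begin{equation*}
\text{length}(\eta) \;\geq\; \frac{l(y_n^+) - l(y_n^-)}{c} \;=\; \frac{2n}{c},
\end{equation*}
and similarly $d(z_0, y_n^{\pm}) \geq n/c$. Pick a unit-speed $\eta_n$ from $y_n^-$ to $y_n^+$ with length at most $2n/c + 1/n$; then the triangle inequality forces $\eta_n$ to come within distance $1/n$ of $z_0$, say at some parameter $t_n$. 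Shift parameter so that the shifted curve $\widetilde\eta_n$ is defined on $[-a_n, b_n]$ with $\widetilde\eta_n(0)$ the near-$z_0$ point, and $a_n, b_n \to \infty$. Since the $\widetilde\eta_n$ are $1$-Lipschitz and $Z$ is proper (complete plus doubling), a diagonal Arzelà--Ascoli extraction produces a $1$-Lipschitz limit $\gamma : \mathbb{R} \to Z$ with $\gamma(0) = z_0$.

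The key rigidity step is to show $l(\gamma(t)) = l(z_0) + ct$. On every compact subinterval $[s,t]$ the upper-gradient bound gives $|l(\widetilde\eta_n(t)) - l(\widetilde\eta_n(s))| \leq c(t - s)$; summing such estimates over a partition of $[-a_n, b_n]$ and comparing with the \emph{global} total increase $l(y_n^+) - l(y_n^-) = 2n = c(a_n + b_n) - O(1/n)$, any strict inequality on a fixed subinterval would leave a deficit that cannot be recovered, since the one-sided upper-gradient bound forbids compensating surpluses elsewhere. Passing to the limit, $l \circ \gamma$ is exactly the affine function $t \mapsto l(z_0) + ct$, which forces $d(\gamma(t), \gamma(s)) \geq c^{-1}|l(\gamma(t)) - l(\gamma(s))| = |t-s|$, matching the $1$-Lipschitz upper bound, so $\gamma$ is an isometric embedding of $\mathbb{R}$ and hence a bi-infinite unit-speed geodesic. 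Equality $l(\gamma(t)) - l(\gamma(s)) = \int_s^t g_l(\gamma(\tau))\,d\tau$ is precisely the statement that $\gamma$ is an integral curve of $g_l$.

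The main obstacle I expect is exactly the rigidity paragraph: turning the global saturation $\text{length} \approx \Delta l / c$ into pointwise saturation along \emph{every} fixed subinterval requires careful bookkeeping of the $O(1/n)$ slack. A secondary technical issue is that general metric measure spaces may lack exact length-minimisers, so one must work with near-minimisers throughout and trust properness plus Arzelà--Ascoli to yield a genuine bi-infinite curve. Notably, the $p$-harmonicity in the definition of a generalised linear function enters this proof only indirectly, through its consequence $g_l \equiv c$; no variational comparison with competitors $l + f$ appears explicitly here.
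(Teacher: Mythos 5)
First, a point of order: the paper does not prove this statement at all --- it is quoted verbatim as \cite[Theorem 8.10]{Che} and used as a black box in Section \ref{sec:carnotgroupstep2} --- so there is no in-paper argument to compare yours against, and I can only assess your sketch on its own terms.

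The fatal gap is in the construction of $\eta_n$, before the rigidity step ever starts. You choose $y_n^{\pm}$ only by the condition $l(y_n^{\pm}) = l(z_0) \pm n$, then assert that a near-minimising curve from $y_n^-$ to $y_n^+$ has length at most $2n/c + 1/n$ and is ``forced by the triangle inequality'' to pass within $1/n$ of $z_0$. Neither claim follows. The upper-gradient inequality gives only the lower bound $\mathrm{length}(\eta) \geq 2n/c$; it gives no upper bound on $d(y_n^-, y_n^+)$, so a curve of length close to $2n/c$ need not exist (in $\mathbb{R}^2$ with $l(x_1,x_2)=x_1$ and $c=1$, take $y_n^{\pm} = (\pm n, \pm 10^n)$: then $d(y_n^-,y_n^+) \approx 2\cdot 10^n \gg 2n$). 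Even when a minimiser of length exactly $2n/c$ exists, nothing makes it approach $z_0$: with $z_0=(0,0)$ and $y_n^{\pm} = (\pm n, 10^n)$ the minimising segment is horizontal at height $10^n$ and misses $z_0$ by $10^n$, while all your stated constraints ($l(y_n^\pm)=\pm n$, $d(z_0,y_n^\pm)\geq n$, length $=2n$) are satisfied. To run your scheme you would need $y_n^{\pm}$ with the much stronger property $l(y_n^{\pm}) - l(z_0) = \pm c\, d(z_0,y_n^{\pm}) - o(1)$ and $d(z_0,y_n^{\pm}) \to \infty$, i.e.\ points at which $l$ saturates its Lipschitz bound \emph{as seen from the prescribed point} $z_0$, and then concatenate near-geodesics $y_n^- \to z_0 \to y_n^+$. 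Producing such saturating points at every scale and through an arbitrary $z_0$ is exactly where the $p$-harmonicity of $l$ (not merely surjectivity and $g_l \equiv c$) enters Cheeger's argument via a comparison/maximum-principle step; your closing remark that harmonicity ``enters only indirectly through $g_l \equiv c$'' is therefore precisely the missing ingredient, not a feature. Two secondary issues, both repairable but unaddressed: the inequality $|l(y)-l(z)| \leq c\, d(y,z)$ used at the end to show $\gamma$ is an isometric embedding holds a priori only for the length metric (a PI space is quasiconvex, not geodesic), and $g_l \equiv c$ as a minimal $p$-weak upper gradient controls $l$ only along $p$-a.e.\ curve, so the near-minimisers $\eta_n$ must be selected outside an exceptional family of zero $p$-modulus. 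The rigidity bookkeeping in your third paragraph, by contrast, is fine once a correct construction of $\eta_n$ is in place.
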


Next, we set $b_{\gamma,s}(z) = d(z,\gamma(s)) - |s|$, and define the Busemann functions $b_\gamma^\pm(z)$ by the formula $b_\gamma^\pm(z) = \lim_{s \rightarrow \pm \infty} b_{\gamma,s}(z)$. The limit is well defined since the $b_{\gamma,s}$ is decreasing in $|s|$ and bounded from below on compact subsets of $Z$. Now, we recall \cite[Theorem 8.11]{Che}.

\begin{theorem}
Under the assumptions of Theorem \ref{thm:cheeger810}, for any geodesic $\gamma$ as given by that Theorem, we have
\begin{equation}\label{eq:busemannfunctions}
l(z_0) - \mbox{Lip}(l) \cdot b_{\gamma}^+(z) \leq l(z) \leq l(z_0) + \mbox{Lip}(l) \cdot b_\gamma^-(z).
\end{equation}
\end{theorem}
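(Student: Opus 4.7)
The plan is to combine two facts about $l$ and $\gamma$ and then derive both bounds by one-line applications of the triangle inequality together with a passage to the limit. The first fact is that $l$ is globally $\mbox{Lip}(l)$-Lipschitz, which follows from the identity $g_l \equiv \mbox{Lip}(l)$ being an upper gradient together with the length-space character of $Z$ (a standard consequence of the doubling and $(1,p)$-Poincar\'e hypotheses). The second fact is the affine identity
\begin{equation*}
l(\gamma(s)) = l(z_0) + \mbox{Lip}(l) \cdot s \qquad \text{for all } s \in \mathbb{R},
\end{equation*}
which is what the integral curve statement in Theorem \ref{thm:cheeger810} provides once one checks that equality is attained in the upper gradient inequality along the unit-speed geodesic $\gamma$ (orienting $\gamma$ if necessary so that $l$ is nondecreasing along it).

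Granted these two facts, the upper bound in \eqref{eq:busemannfunctions} will come from combining $l(z) - l(\gamma(s)) \leq \mbox{Lip}(l) \cdot d(z, \gamma(s))$ with the affine identity at $s < 0$, which after using $s = -|s|$ rearranges to
\begin{equation*}
l(z) - l(z_0) \leq \mbox{Lip}(l) \bigl( d(z, \gamma(s)) - |s| \bigr) = \mbox{Lip}(l) \cdot b_{\gamma,s}(z),
\end{equation*}
and then letting $s \to -\infty$ so that $b_{\gamma,s}(z) \to b_\gamma^-(z)$. The lower bound is symmetric: apply the Lipschitz estimate to the pair $(l(\gamma(s)), l(z))$ in the reverse order at $s > 0$; the same rearrangement gives $l(z) - l(z_0) \geq -\mbox{Lip}(l) \cdot b_{\gamma,s}(z)$, and $s \to +\infty$ invokes the defining limit of $b_\gamma^+(z)$.

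The main obstacle will be the careful verification that $l \circ \gamma$ is \emph{exactly} affine with slope $\mbox{Lip}(l)$ — the upper gradient inequality on its own only yields $|l(\gamma(t)) - l(\gamma(s))| \leq \mbox{Lip}(l) \cdot |t-s|$, and one needs to extract the matching lower bound (and the monotonicity) from the notion of integral curve used in \cite{Che}. Once this is settled, no further interaction with the geometry of $Z$ is required; the remaining computation is only the global Lipschitz property of $l$ and a passage to the limit in the geodesic parameter, both of which are formal.
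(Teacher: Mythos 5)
The paper does not prove this statement at all --- it is quoted verbatim from \cite[Theorem 8.11]{Che} --- so there is no internal proof to compare against. Your argument is correct and is essentially Cheeger's original one: the integral-curve conclusion of Theorem \ref{thm:cheeger810} gives the exact affine identity $l(\gamma(s)) = l(z_0) + \mbox{Lip}(l)\, s$, and the two inequalities then follow from the global Lipschitz bound applied to the pairs $(z,\gamma(s))$ with $s \to -\infty$ and $s \to +\infty$ respectively, exactly as you rearrange them. The only point to state more carefully is your first fact: doubling plus the $(1,p)$-Poincar\'e inequality yields quasiconvexity, not a length-space structure outright, so the estimate $|l(z)-l(w)| \leq \mbox{Lip}(l)\, d(z,w)$ with the \emph{sharp} constant requires that one has already passed to the length (geodesic) metric --- which is implicit in the setting of Theorem \ref{thm:cheeger810}, since it asserts the existence of bi-infinite geodesics, and is automatic for the tangent cones to which the result is applied here. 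With that understood, nothing is missing.
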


Our goal is to analyse the Busemann functions to show that on the Heisenberg group $\mathbb{H}^1$ these inequalities are in fact equalities (as in the Euclidean case), which will give a structure result on the generalised linear functions. In the case interesting to us, when $Z = \mathbb{H}^1$, unbounded geodesics are horizontal lines (which is not true in general even for Carnot groups), see \cite[Proposition 5.6]{LeDH}. Let $\gamma$ be given by Theorem \ref{thm:cheeger810}; then, since it is a horizontal line, it is of the form $\gamma(s) = (as, bs, 0)$, where $a^2 + b^2 = 1$. Since by equation \eqref{eq:ccdistance} the distance $d_{\mathbb{H}^1}$ is invariant with respect to rotations in the horizontal plane, without loss of generality we may assume that $(a,b)=(1,0)$. Then, given $z = (z_1, z_2, z_3)$, we have
\begin{equation}
b_{\gamma,s}(z) = d_{\mathbb{H}^1}(z,\gamma(s)) - |s| = d_0(0,(-\gamma(s)) \cdot z) - |s| = d_0((z_1 - s, z_2, z_3 - 2 z_2 s)) - |s| = 
\end{equation}
\begin{equation}
= \frac{(z_3 - 2 z_2 s)}{\sqrt{(z_1 - s)^2 + z_2^2}} \sin\bigg(\pi H^{-1}(\frac{z_3 - 2 z_2 s}{(z_1 - s)^2 + z_2^2})\bigg) + \qquad \qquad \qquad \qquad \qquad \qquad \qquad \qquad
\end{equation}
\begin{equation}
\qquad \qquad \qquad \qquad \qquad \qquad \qquad \qquad + \sqrt{(z_1 - s)^2 + z_2^2} \, \cos\bigg(\pi H^{-1}(\frac{z_3 - 2 z_2 s}{(z_1 - s)^2 + z_2^2})\bigg) - |s|.
\end{equation}
We will compute the limit of $b_{\gamma,s}$ as $s \rightarrow + \infty$; the other calculation is similar. Recall that $H'(0) \neq 0$ and $H(0) = 0$, so on the first part we have
\begin{equation}
\lim_{s \rightarrow \infty} \frac{(z_3 - 2 z_2 s)}{\sqrt{(z_1 - s)^2 + z_2^2}} \sin\bigg(\pi H^{-1}(\frac{z_3 - 2 z_2 s}{(z_1 - s)^2 + z_2^2})\bigg) = \qquad \qquad \qquad \qquad \qquad \qquad
\end{equation}
\begin{equation}
\qquad \qquad \qquad \qquad \qquad \qquad \qquad = \lim_{s \rightarrow \infty} (- 2 z_2) \sin\bigg(\pi H^{-1}(\frac{z_3 - 2 z_2 s}{(z_1 - s)^2 + z_2^2})\bigg) = 0.
\end{equation}
On the second part, we have
\begin{equation}
\lim_{s \rightarrow \infty} \bigg( \sqrt{(z_1 - s)^2 + z_2^2} \, \cos\bigg(\pi H^{-1}(\frac{z_3 - 2 z_2 s}{(z_1 - s)^2 + z_2^2})\bigg) - s \bigg) = \qquad \qquad \qquad \qquad \qquad
\end{equation}
\begin{equation}
= \lim_{s \rightarrow \infty} \frac{((z_1 - s)^2 + z_2^2) \cos^2(\pi H^{-1}(\frac{z_3 - 2 z_2 s}{(z_1 - s)^2 + z_2^2})) - s^2}{\sqrt{(z_1 - s)^2 + z_2^2} \, \cos(\pi H^{-1}(\frac{z_3 - 2 z_2 s}{(z_1 - s)^2 + z_2^2})) + s} = 
\end{equation}
\begin{equation}
= \lim_{s \rightarrow \infty} \frac{1}{2s} \bigg(((z_1 - s)^2 + z_2^2) \cos^2(\pi H^{-1}(\frac{z_3 - 2 z_2 s}{(z_1 - s)^2 + z_2^2})) - s^2 \bigg) = 
\end{equation}
\begin{equation}
= \lim_{s \rightarrow \infty} \frac{1}{2s} \bigg((s^2 - 2 z_1 s) \cos^2(\pi H^{-1}(\frac{z_3 - 2 z_2 s}{(z_1 - s)^2 + z_2^2})) - s^2 \bigg) = 
\end{equation}
\begin{equation}
= \lim_{s \rightarrow \infty} \bigg(\frac{1}{2} s (\cos^2(\pi H^{-1}(\frac{z_3 - 2 z_2 s}{(z_1 - s)^2 + z_2^2})) - 1) - z_1 \cos^2(\pi H^{-1}(\frac{z_3 - 2 z_2 s}{(z_1 - s)^2 + z_2^2})) \bigg) = - z_1.
\end{equation}
Hence, we have that $b_{\gamma}^+((z_1, z_2, z_3)) = - z_1$; similarly, we have $b_\gamma^-((z_1, z_2, z_3)) = z_1$. In particular, $b_\gamma^+ = - b_\gamma^-$, so we have equalities in equation \eqref{eq:busemannfunctions}. Assuming additionally that $l((0,0,0)) = 0$, we have that the function $l$ is of the form $l((z_1, z_2, z_3)) = \mbox{Lip}(l) z_1$. In general, for any horizontal line $(as, bs, 0)$, we obtain that for $v=(a,b,0)$ we have $l(z) = \mbox{Lip}(l) (z \cdot v)$, where $\cdot$ denotes the usual scalar product in $\mathbb{R}^3$.

Now, we investigate the proof of Theorem \ref{thm:bbmeuclideantangent}: the only place where the assumption that the tangent is the Euclidean space comes into play is in the final step of the proof of Proposition \ref{prop:pointwiserademacher}. In that step, we instead proceed as follows. By the considerations above $f_{0,x}$ (in the notation of Proposition \ref{prop:pointwiserademacher}) is of the form
\begin{equation}
f_{0,x}(z) = \mbox{Lip}(f)(x) \, z \cdot v,
\end{equation}
where $v = (a,b,0)$ with $a^2 + b^2 = 1$. By definition of measured Gromov-Hausdorff convergence the measures $(\phi_r)_{\#} \nu_r$ converge weakly to $c_{\mathbb{H}^1} \mathcal{L}^3$, where $c_{\mathbb{H}^1} = (\mathcal{L}^3(B_{\mathbb{H}^1}(0,1)))^{-1}$, so
\begin{equation}
\lim_{r \rightarrow 0} \bigg( \int_{B(x_\infty,1)} |f_{0,x}(z)|^p \, d(\phi_r)_{\#}\nu_r(z) \bigg)= \lim_{r \rightarrow 0} \, \bigg(|\mbox{Lip}(f)(x)|^p \int_{B_{\mathbb{H}^1}(0,1)} |z \cdot v|^p \, d(\phi_r)_{\#} \nu_r \bigg) = 
\end{equation}
\begin{equation}
= \bigg(\int_{B_{\mathbb{H}^1}(0,1)} |z \cdot v|^p \, c_{\mathbb{H}^1} \, d\mathcal{L}^N(z) \bigg) \, |\mbox{Lip}(f)(x)|^p = C_{p,\mathbb{H}^1} \, |\mbox{Lip}(f)(x)|^p,
\end{equation}
where
\begin{equation}
C_{p,\mathbb{H}^1} = \dashint_{B_{\mathbb{H}^1}(0,1)} |z \cdot v|^p.
\end{equation}
Here, $v$ is any unit horizontal vector. Note that this does not depend on the choice of $v$ due to the invariance of the distance $d_{\mathbb{H}^1}(0,x)$ with respect to horizontal rotations - it is a constant that again only depends on $p$ and the choice of the tangent space. Therefore, we proved that

\begin{corollary}\label{thm:bbmheisenbergtangent}
Suppose that $(X,d,\nu)$ is a complete, separable, doubling metric measure space which supports a $(1,p)$-Poincar\'e inequality. Suppose additionally that the tangent space to $X$ for $\nu$-a.e. $x \in X$ is the Heisenberg group $\mathbb{H}^1$. Let $f \in W^{1,p}(X,d,\nu)$, where $p \in (1,\infty)$.
Then
\begin{equation}
\lim_{r \rightarrow 0} \, \frac{1}{r^p} \int_{X} \dashint_{B(x,r)} |f(x) - f(y)|^p \, d\mathcal{L}^N(y) \, d\mathcal{L}^N(x) = C_{p,\mathbb{H}^1} \cdot \mbox{Ch}_p(f).
\end{equation}
\end{corollary}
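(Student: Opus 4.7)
The plan is to revisit the proof of Theorem \ref{thm:bbmeuclideantangent} and locate the single place where the Euclidean structure of the tangent was actually used; this is the identification of the rescaled limit $f_{0,x}$ as an affine function of the form $\mbox{Lip}(f)(x)\,(z\cdot v)$ via \cite[Theorem 8.11]{Che}. Every other ingredient, namely Lemma \ref{lem:remainder} controlling the remainder between $f_{r,x}$ and $f_{0,x}$, the uniform bound of Lemma \ref{lem:uniformboundeuclideantangent}, and the density of bounded Lipschitz functions in $W^{1,p}(X,d,\nu)$ used at the end of the proof of Theorem \ref{thm:bbmeuclideantangent}, is insensitive to the nature of the tangent cone and transfers verbatim to the Heisenberg setting.

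The core task therefore reduces to showing that, when $X_\infty = \mathbb{H}^1$, every generalized linear function $l$ with $l(0)=0$ has the form $l(z) = \mbox{Lip}(l)\,(z\cdot v)$ for some horizontal unit vector $v = (a,b,0)$, where $\cdot$ denotes the usual scalar product in $\mathbb{R}^3$. First I would apply Theorem \ref{thm:cheeger810} to produce a bi-infinite geodesic $\gamma$ through a base point that is an integral curve of $g_l$. By \cite[Proposition 5.6]{LeDH}, such geodesics in $\mathbb{H}^1$ are horizontal lines, and the rotational symmetry of $d_0$ in the $(x_1,x_2)$-plane visible from \eqref{eq:ccdistance} lets me reduce to $\gamma(s)=(s,0,0)$. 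Then I would compute the Busemann functions $b_\gamma^\pm$ directly from \eqref{eq:ccdistance}: because $H$ is analytic with $H(0)=0$ and $H'(0)\neq 0$, the argument of $H^{-1}$ vanishes at rate $1/s$, so the $\sin$ summand drops out in the limit, and a conjugate multiplication in the $\cos$ summand rewrites $\sqrt{(z_1-s)^2+z_2^2}\cos(\cdot)-s$ as a quotient whose leading order singles out $-z_1$. This yields $b_\gamma^+(z)=-z_1$ and $b_\gamma^-(z)=z_1$, so $b_\gamma^+=-b_\gamma^-$, forcing equality in the sandwich \eqref{eq:busemannfunctions} and producing the desired linear representation.

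Once this affine description of $f_{0,x}$ is in hand, the weak convergence $(\phi_r)_{\#}\nu_r\rightharpoonup c_{\mathbb{H}^1}\mathcal{L}^3$ together with boundedness and continuity of $z\mapsto|z\cdot v|^p$ on $B_{\mathbb{H}^1}(0,1)$ gives, for $\nu$-a.e.\ $x\in X$ and every $f\in\mbox{Lip}(X)$, the pointwise limit
\begin{equation*}
\lim_{r\to 0}\frac{1}{r^p}\dashint_{B(x,r)}|f(x)-f(y)|^p\,d\nu(y)=C_{p,\mathbb{H}^1}\,|\mbox{Lip}(f)(x)|^p,
\end{equation*}
with $C_{p,\mathbb{H}^1}=\dashint_{B_{\mathbb{H}^1}(0,1)}|z\cdot v|^p$ independent of the chosen horizontal unit $v$, again by rotational symmetry in the horizontal plane. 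Dominated convergence against the constant majorant $L^p$ (with $L$ the Lipschitz constant of $f$) promotes this pointwise identity to the integrated one on $\mbox{Lip}(X)$, and the Lipschitz-in-$f$ estimate on $\|\overline{f}_r\|_{L^p(X\times X,\nu\otimes\nu)}$ from Lemma \ref{lem:uniformboundeuclideantangent}, combined with the density of bounded Lipschitz functions with bounded support in $W^{1,p}(X,d,\nu)$, extends it to all $f\in W^{1,p}(X,d,\nu)$ just as in the proof of Theorem \ref{thm:bbmeuclideantangent}.

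The main obstacle is the Busemann calculation: one has to verify that the nonlinear Heisenberg twist terms in \eqref{eq:ccdistance} are swallowed by the $-|s|$ subtraction in the limits $s\to\pm\infty$, which hinges on the precise behaviour of $H^{-1}$ near the origin and on the conjugate-multiplication trick that converts $\sqrt{(z_1-s)^2+z_2^2}\cos(\cdot)-s$ into a quotient with denominator of order $s$. Everything after this computation is a direct translation of the Euclidean argument, so the proof structure is genuinely the same as for Theorem \ref{thm:bbmeuclideantangent}; what changes is the explicit tangential geometry feeding into the final identification of the limit.
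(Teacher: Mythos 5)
Your proposal follows the paper's own argument essentially step for step: isolating the use of the tangent structure in the identification of $f_{0,x}$ as a generalised linear function, invoking Theorem \ref{thm:cheeger810} and the fact that unbounded geodesics in $\mathbb{H}^1$ are horizontal lines, computing the Busemann functions from \eqref{eq:ccdistance} to force equality in \eqref{eq:busemannfunctions}, and then repeating the weak-convergence, dominated-convergence and density steps of Theorem \ref{thm:bbmeuclideantangent}. This is correct and coincides with the paper's proof, including the key Busemann limit $b_\gamma^\pm(z)=\mp z_1$.
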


Finally, notice that since the formula \eqref{eq:ccdistance} for the distance holds also in higher Heisenberg groups $\mathbb{H}^N$ (as proved in \cite{HZ}), the same proof works also in that case; however, for simplicity we presented the proof for $\mathbb{H}^1$.

\subsection{Spaces with tangent changing from point to point}

In this subsection, we want to illustrate that the assumption that the tangent space is fixed is crucial in order for Theorem \ref{thm:bbmeuclideantangent} to hold. To this end, we will use the space constructed in \cite[Remark 6.19(a)]{HK} by gluing together the Euclidean space $\mathbb{R}^4$ and the Heisenberg group.

Suppose that $A$ is a closed subset of a metric space $Y$ such that an isometric copy of $A$ lies inside a metric space $Z$, i.e. there exists an isometric embedding $i: A  \rightarrow Z$. We understand this embedding to be fixed and consider $A$ to be a closed subset of both $Y$ and $Z$. We define the space $Y \cup_A Z$ to be the disjoint union of $Y$ and $Z$ with points in the two copies of $A$ identified. This space is endowed with a natural metric which extends the original metrics in $Y$ and $Z$; given $y,z \in Y \cup_A Z$, we set
\begin{equation}
d(y,z) = \inf_{a \in A} d_Y(y,a) + d_Z(a,z).
\end{equation}

\begin{example}\label{ex:gluing}
Let $X = \mathbb{R}^4 \cup_A \mathbb{H}^1$, where $A$ is an unbounded geodesic $($any line in $\mathbb{R}^4$ and a horizontal line in $\mathbb{H}^1)$. As shown in \cite[Remark 6.19(a)]{HK}, this space is doubling $($even $4$-regular$)$ and admits a $(1,p)$-Poincar\'e inequality for all $p > 3$.

Now, we take two functions with supports away from $A$. Namely, we set $f \in C_c^\infty(\mathbb{R}^4 \backslash A)$ and $g \in C_c^\infty(\mathbb{H}^1 \backslash A)$. We extend them by zero to the whole space $X$. Then, since the support of $f$ lies entirely in $\mathbb{R}^4$, by Theorem \ref{thm:bbmeuclideantangent} we have
\begin{equation}
\lim_{r \rightarrow 0} \, \frac{1}{r^p} \int_{X} \dashint_{B(x,r)} |f(x) - f(y)|^p \, d\mathcal{L}^N(y) \, d\mathcal{L}^N(x) = C_{p,4} \cdot \mbox{Ch}_p(f)
\end{equation}
and since the support of $g$ lies entirely in $\mathbb{H}^1$, by Corollary \ref{thm:bbmheisenbergtangent} we have
\begin{equation}
\lim_{r \rightarrow 0} \, \frac{1}{r^p} \int_{X} \dashint_{B(x,r)} |g(x) - g(y)|^p \, d\mathcal{L}^N(y) \, d\mathcal{L}^N(x) = C_{p,\mathbb{H}^1} \cdot \mbox{Ch}_p(g).
\end{equation}
In particular, there is no single constant $C_{p,X}$ such that the statement of Theorem \ref{thm:bbmeuclideantangent} holds, since there exists $p > 3$ such that $C_{p,4} \neq C_{p,\mathbb{H}^1}$; for instance, for $p = 4$ we have
\begin{equation}
C_{4,4} = \dashint_{B_{\mathbb{R}^4}(0,1)} |z \cdot v|^4 \, d\mathcal{L}^4(z) = \dashint_{B_{\mathbb{R}^4}(0,1)} |z \cdot e_1|^4  \, d\mathcal{L}^4(z)  = \frac{1}{\frac{1}{2}\pi^2} \int_{B_{\mathbb{R}^4}(0,1)} |z_1|^4  \, d\mathcal{L}^4(z)  = 
\end{equation}
\begin{equation}
= \frac{2}{\pi^2} \int_{-1}^1 |z_1|^4 \, \bigg(\int_{B((z_1,0,0,0), \sqrt{1 - z_1^2})} 1 \, d\mathcal{L}^3((z_2,z_3,z_4)) \bigg) \, d\mathcal{L}^1(z_1) =
\end{equation}
\begin{equation}
= \frac{2}{\pi^2} \int_{-1}^1 |z_1|^4 \, \frac{4}{3} \pi (1 - z_1^2)^{\frac{3}{2}} \, d\mathcal{L}^1(z_1) = \frac{8}{3 \pi} \int_{-1}^1 |z_1|^4 \, (1 - z_1^2)^{\frac{3}{2}} \, d\mathcal{L}^1(z_1) = \frac{1}{16} = 0.0625,
\end{equation}
while the constant $C_{4,\mathbb{H}^1}$ $($which we can compute numerically from the explicit parametrisation of the unit ball in $\mathbb{H}^1$ given in \cite{Mon00}$)$ has value $C_{4,\mathbb{H}^1} \approx 0.106$. Hence, for $p=4$ the space is doubling and satisfies a $(1,p)$-Poincar\'e inequality, but since it has different tangents at different points, an analogue of Theorem \ref{thm:bbmeuclideantangent} does not hold in this setting.
\end{example}

{\bf Acknowledgements.} This work has been partially supported by the research project no. 2017/27/N/ST1/02418 funded by the National Science Centre, Poland. The motivation for writing this paper originated during my visit to the Scuola Normale Superiore di Pisa; I wish to thank them for their hospitality and Luigi Ambrosio for his support.

\bibliographystyle{siam}%
\bibliography{WG-references}%

\end{document}